\newcommand{\N}{{\mathbb N}}
\newcommand{\R}{{\mathbb R}}
\newcommand{\ep}{{\varepsilon}}
\newcommand{\supp}{\,\mathrm{supp}\ }
\newcommand{\sgn}{\mathrm{sgn}}
\newcommand{\E}{\,\mathrm{E}}
\newcommand{\defeq}{\vcentcolon=}
\newcommand{\set}[1]{\left\{ #1 \right\}}
\newcommand{\Set}[2][]{#1\{ #2 #1\}}
\newcommand{\innerprod}[2]{\left\langle #1, #2 \right\rangle}
\newcommand{\size}[1]{\left\lvert #1 \right\rvert}
\newcommand{\abs}[1]{\left\lvert #1 \right\rvert}
\newcommand{\Abs}[2][]{#1\lvert #2 #1\rvert}
\newcommand{\norm}[1]{\left\lVert #1 \right\rVert}
\newcommand{\Norm}[2][]{#1\lVert #2 #1\rVert}
\newcommand{\parens}[1]{\left( #1 \right)}
\newcommand{\bracks}[1]{\left[ #1 \right]}
\newcommand{\ceil}[1]{\left\lceil #1 \right\rceil}
\theoremstyle{plain}
\newtheorem{theorem}{Theorem}[section]
\newtheorem{corollary}[theorem]{Corollary}
\newtheorem{proposition}[theorem]{Proposition}
\newtheorem{lemma}[theorem]{Lemma}
\theoremstyle{definition}
\newtheorem{definition}{Definition}[section]
\theoremstyle{remark}
\newtheorem{remark}{Remark}[section]
\definecolor{red}{rgb}{.8,0,0}
\definecolor{green}{rgb}{0,.7,0}
\definecolor{blue}{rgb}{0,0,.8}
\definecolor{yellow}{rgb}{0.8,0.8,0}
\title{Inverse Expander Mixing for Hypergraphs}
\author{E. Cohen\footnote{School of Mathematics and School of Computer Science, Georgia Institute of Technology; research supported in part by the NSF grants DMS 1101447 and 1407657}, D. Mubayi\footnote{Department of Mathematics, Statistics, and Computer Science, University of Illinois at Chicago; research supported in part by the NSF grants DMS 0969092 and 1300138}, P. Ralli\footnotemark[1], and P. Tetali\footnotemark[1]}
\date{}
\begin{document}

\maketitle

\begin{abstract}
    We formulate and prove inverse mixing lemmas in the settings of simplicial complexes and $k$-uniform hypergraphs. In the  hypergraph setting,  we extend results of Bilu and Linial for graphs. In the simplicial complex setting, our results answer a question of  Parzanchevski et al.
\end{abstract}

\section{Introduction}

Beginning with the seminal work of Thomason \cite{T} and Chung-Graham-Wilson \cite{CGW} the theory of quasirandom graphs has served as an important tool in graph theory and the study of random structures.
Their work showed that many graphs share a collection of common properties, including local properties like subgraph counts, and global properties like edge distribution and eigenvalues.
The connection between these different concepts has proved to be an invaluable tool in graph theory and computer science over the past two decades.
Even the recent theory of graph limits has borrowed many insights from quasirandom graphs \cite{Lov12}.
A fundamental result in this theory is the quantitative relationship between spectral and expansion properties of a graph.
Although early versions of this were proved by Tanner \cite{Tan84} and Alon \cite{Alon86}, the formulation of the result below, which has been termed as the Expander Mixing Lemma for graphs, is perhaps the most popular one.
For a graph $G$, let $\lambda(G)$ be the second largest eigenvalue, in absolute value, of the adjacency matrix $A(G)$.

\begin{theorem}[Expander Mixing Lemma for graphs, \citet{AC}]
    \label{thm:mixing_graph}
    If $G$ is an $r$-regular graph   then for any $S,T\subseteq V(G)$, 
    \begin{equation}
        \abs{E(S,T)-\tfrac{r}{n}\size{S}\size{T}}
        \leq \lambda(G)\sqrt{\size{S}\size{T}},
    \end{equation}
    where $E(S,T)$ is the number of  $(s, t) \in S\times T$ such that $st \in E(G)$.
\end{theorem}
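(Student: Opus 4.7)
The plan is to interpret $E(S,T)$ as a bilinear form on the adjacency matrix and then apply the spectral theorem, isolating the contribution of the all-ones eigenvector.

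First I would write $E(S,T) = \ones_S^T A \ones_T$, where $\ones_S, \ones_T \in \R^n$ are the indicator vectors of $S$ and $T$ and $A = A(G)$ is the adjacency matrix. Since $G$ is $r$-regular, $\ones$ is an eigenvector of $A$ with eigenvalue $r$, and because $A$ is symmetric we can pick an orthonormal eigenbasis $v_1 = \ones/\sqrt{n}, v_2, \dots, v_n$ with corresponding eigenvalues $r = \lambda_1, \lambda_2, \dots, \lambda_n$. By definition, $\abs{\lambda_i} \le \lambda(G)$ for every $i \ge 2$.

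Next I would expand $\ones_S = \sum_i \alpha_i v_i$ and $\ones_T = \sum_i \beta_i v_i$. The coefficients along $v_1$ are $\alpha_1 = \innerprod{\ones_S}{v_1} = \size{S}/\sqrt{n}$ and $\beta_1 = \size{T}/\sqrt{n}$, so the contribution of the top eigenvector to $\ones_S^T A \ones_T$ is exactly $r \alpha_1 \beta_1 = \tfrac{r}{n} \size{S} \size{T}$. Subtracting this off gives
\begin{equation}
    E(S,T) - \tfrac{r}{n}\size{S}\size{T} = \sum_{i \ge 2} \lambda_i \alpha_i \beta_i.
\end{equation}

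Now I would bound the right-hand side. Taking absolute values, pulling out $\lambda(G)$, and applying Cauchy--Schwarz yields
\begin{equation}
    \Abs{\sum_{i \ge 2} \lambda_i \alpha_i \beta_i}
    \le \lambda(G) \sqrt{\sum_{i \ge 2} \alpha_i^2} \sqrt{\sum_{i \ge 2} \beta_i^2}
    \le \lambda(G)\, \norm{\ones_S}\, \norm{\ones_T}
    = \lambda(G)\sqrt{\size{S}\size{T}},
\end{equation}
using Parseval's identity $\sum_i \alpha_i^2 = \norm{\ones_S}^2 = \size{S}$ and similarly for $T$. This completes the argument.

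There is no real obstacle; the proof is short and the only mild subtlety is remembering that both signs of eigenvalues must be absorbed into $\lambda(G)$, which is why $\lambda(G)$ is defined as the second largest eigenvalue \emph{in absolute value}. The reason I sketch this standard argument is that the hypergraph and simplicial complex analogues presumably follow the same spectral-decomposition strategy applied to an appropriate higher-dimensional operator, so fixing the graph proof in this form will serve as a template for the generalizations that motivate the paper.
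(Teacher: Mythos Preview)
Your argument is correct and is the standard spectral proof of the Alon--Chung expander mixing lemma. Note, however, that the paper does not actually prove this theorem: it is quoted as a known result and attributed to \cite{AC}, so there is no ``paper's own proof'' to compare against. The closest thing in the paper is the short proof of the hypergraph mixing lemma (Theorem~\ref{thm:mixing_hypergraph_density}), which packages the same idea slightly differently: rather than expanding in an eigenbasis, it writes $e(V_1,\dots,V_k) - \alpha e_K(V_1,\dots,V_k) = (A-\alpha A_K)(x^1,\dots,x^k)$ and bounds this directly by $\norm{A-\alpha A_K}\prod_i\norm{x^i}$, with the second eigenvalue \emph{defined} as that operator norm. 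Your explicit eigendecomposition is exactly what justifies that definition in the graph case, so the two viewpoints coincide.

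One small remark on your closing comment: the hypergraph and simplicial analogues in this paper do \emph{not} follow the spectral-decomposition template you describe. The forward mixing lemmas are immediate from the operator-norm definition as above, while the inverse mixing lemmas---the paper's main contributions---go in the opposite direction and require the Bilu--Linial machinery rather than anything like an eigenbasis expansion.
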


The converse to Theorem~\ref{thm:mixing_graph} is essentially a question about approximating certain quadratic forms on the sphere by points in a cube.
This was achieved by Bilu and Linial \cite{BL} who proved the following converse to the Expander Mixing Lemma.

\begin{theorem}[Inverse Mixing Lemma for graphs, \citet{BL}]
    \label{thm:inverse_mixing_graph}
    If $G$ is an $r$-regular graph such that for every disjoint $S,T\subset V(G)$  
    \begin{equation}
        \abs{E(S,T)-\tfrac{r}{n}\size{S}\size{T}}
        \leq \rho\sqrt{\size{S}\size{T}},
    \end{equation}
    then 
    \begin{equation}
        \lambda(G) = O(\rho\,(\log(r/\rho)+1)).
    \end{equation}
\end{theorem}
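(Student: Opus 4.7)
Since $G$ is $r$-regular, $A\ones = r\ones$ and $\lambda(G)$ is the operator norm of $A$ restricted to $\ones^{\perp}$. Setting $M \defeq A - (r/n)J$, where $J$ is the all-ones matrix, one has $M\ones = 0$, and since $M$ agrees with $A$ on $\ones^{\perp}$, $\lambda(G) = \norm{M}$. The hypothesis rewrites as the bilinear discrepancy bound $\abs{\ones_S^T M \ones_T} \leq \rho\sqrt{\size{S}\size{T}}$ for every disjoint $S, T \subseteq V(G)$, since $\ones_S^T A \ones_T = E(S,T)$. The plan is to control $\norm{M} = \max_{\norm{u}=\norm{v}=1}\abs{u^T M v}$ by approximating arbitrary unit vectors by combinations of indicator vectors of disjoint sets.

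First I would split $u = u^+ - u^-$ and $v = v^+ - v^-$ into positive and negative parts, reducing via the triangle inequality to bounding $\abs{y^T M z}$ for nonnegative unit $y, z$. Next, dyadically discretize each of $y, z$: truncate entries smaller than a threshold $\tau$ (to be chosen) to zero and round each remaining entry down to the nearest power of two, producing level sets $Y_i = \set{j : \tilde y_j = 2^{-i}}$ and analogously $Z_j$, with $K = \log_2(1/\tau) + O(1)$ distinct levels in play.

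Then $\tilde y^T M \tilde z = \sum_{i,j} 2^{-i-j}\,\ones_{Y_i}^T M \ones_{Z_j}$. Whenever $Y_i \cap Z_j = \emptyset$ the hypothesis bounds the inner product by $\rho\sqrt{\size{Y_i}\size{Z_j}}$. For overlapping pairs, expand $\ones_{Y_i}^T M \ones_{Z_j}$ as a sum over the four blocks $(Y_i\setminus Z_j,Z_j\setminus Y_i)$, $(Y_i\setminus Z_j,Y_i\cap Z_j)$, $(Y_i\cap Z_j,Z_j\setminus Y_i)$, and $(Y_i\cap Z_j,Y_i\cap Z_j)$: the first three are disjoint pairs and are controlled by the hypothesis, while the diagonal block $\ones_W^T M \ones_W$ for $W = Y_i\cap Z_j$ requires an auxiliary argument — for instance a randomized-bisection of $W$ into $W_1\sqcup W_2$, reducing $e(W)$ to $e(W_1, W_2)$ plus expectations and then applying the disjoint hypothesis to $\ones_{W_1}^T M \ones_{W_2}$. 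Summing the $O(\rho\sqrt{\size{Y_i}\size{Z_j}})$ contributions and invoking Cauchy--Schwarz, $\sum_i 2^{-i}\sqrt{\size{Y_i}} \leq \sqrt{K\cdot\sum_i 2^{-2i}\size{Y_i}} \leq \sqrt{K}\cdot\norm{y}_2 = \sqrt{K}$, produces a total of $O(\rho K)$.

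The main obstacle is calibrating the truncation threshold. One must choose $\tau$ small enough that entries below $\tau$ contribute only $O(\rho)$ to $y^T M z$ — for this one uses a crude Frobenius/spectral bound on the truncated portion together with $\|M\|_F^2 = O(nr)$ — yet large enough that $K = O(\log(r/\rho)+1)$. This trade-off is precisely what produces the logarithmic factor. The delicate bookkeeping then consists of threading the sign split, the dyadic decomposition, the overlap correction, and the diagonal bisection argument together while keeping every error term within a universal constant multiple of $\rho(\log(r/\rho)+1)$; the diagonal step is the most subtle, since the hypothesis is silent on $\ones_W^T M \ones_W$ and must be invoked indirectly via the disjoint-pair $(W_1,W_2)$.
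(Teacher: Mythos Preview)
Your overall architecture is right and closely matches the paper's approach (which in turn follows Bilu--Linial): reduce to $M = A - \tfrac{r}{n}J$, decompose real vectors into dyadic level sets, handle signs by splitting into positive and negative parts, and handle overlaps by partitioning (your randomized bisection plays the role of the paper's averaging over partitions in Lemma~\ref{lem:expansion_equivalence}).

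The gap is in your tail step. You propose to truncate entries below a threshold $\tau$ and control the truncated piece using $\norm{M}_F^2 = O(nr)$. But the Frobenius norm only gives $\abs{y'^T M z} \le \norm{y'}\,\norm{M}_F \le O(\sqrt{nr})$, and $\norm{y'}$ need not be small (a unit vector can have all its mass in entries below $\tau$). Any route through $\norm{M}_F$ or through $\norm{z}_1 \le \sqrt{n}$ drags in a factor of $\sqrt{n}$ or $\log n$, so the best you can balance to is $O(\rho(\log(r/\rho)+\log n))$, not $O(\rho(\log(r/\rho)+1))$.

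The fix, which is exactly what the paper does in Lemma~\ref{lem:powers_of_two} (and what Bilu--Linial do), is to abandon truncation and instead split the double sum over dyadic levels $(i,j)$ into \emph{near} pairs $\abs{i-j} < \gamma$ and \emph{far} pairs $\abs{i-j} \ge \gamma$. For near pairs use your hypothesis and AM--GM to get $O(\rho\gamma)$. For far pairs, say $j \ge i+\gamma$, do not invoke the hypothesis at all: bound $2^{-j}\le 2^{-i-\gamma}$ and use the $\ell^1$ row bound $\sum_b\abs{M_{ab}}\le 2r$ together with disjointness of the $Z_j$ to get $\sum_j \abs{\ones_{Y_i}^T M\,\ones_{Z_j}} \le 2r\abs{Y_i}$, hence a total far contribution of $O(r\,2^{-\gamma})$. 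Choosing $\gamma = \log(r/\rho)$ balances the two to $O(\rho\log(r/\rho))$ with no $n$-dependence. The crucial point is that the row-sum bound is $O(r)$, not $O(nr)$ or $O(\sqrt{nr})$, and the far-pair argument exploits this without ever passing through an $\ell^1$--$\ell^2$ comparison on the vectors.
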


\begin{remark}
    This theorem is a converse to the mixing lemma. 
    However, we borrow the language used in \cite{PRT13}, and we refer to this as the Inverse Mixing Lemma.
\end{remark}

Soon after the initial papers on quasirandom graphs, quasirandom properties of other structures were investigated including $k$-uniform hypergraphs for fixed $k \ge 2$ \cite{ChungGraham}. 
It was quickly observed by Chung and Graham and R\"odl that there could be no straightforward generalization of the theory of quasirandom graphs to hypergraphs.  
Over the past two decades, many researchers have extended and generalized parts of the theory and today the situation for hypergraphs is much more satisfactory \cite{Chung12, FW, KNRS, CHPS, LM1, LM2, Towsner13}. 
In particular, the recent work in \cite{LM} defines a hypergraph eigenvalue that originated in the work of Friedman and Wigderson \cite{FW}. 
An analogue of Theorem \ref{thm:mixing_graph} was already proved in \cite{FW}, and this was extended to more general situations in \cite{LM}. 
In the present work, we prove a converse to the mixing lemma for hypergraphs, in other words, a hypergraph version of Theorem \ref{thm:inverse_mixing_graph}.

Another direction in which the theory of quasirandom graphs has developed is to simplicial complexes.  
Indeed, the connection between topological combinatorics and random graphs is proving to be a fruitful and successful area of current research \cite{K}.  
Inspired by the success of Cheeger-type inequalities in graphs---relating isoperimetry (namely, vertex and edge expansion of a graph) to spectral information (eigenvalues of the Laplacian of the graph)---along with the aforementioned expander mixing lemmas in graphs and hypergraphs,  several researchers have initiated developing such connections in the simplicial complex settings. 
Indeed, in recent work Parzanchevski, Rosenthal, and Tessler \cite{PRT13} proved an expander mixing lemma for simplicial complexes (of arbitrary dimension), possessing a complete skeleton. 
(See also an independent development by Steenbergen, Klivans and Mukerjee \cite{SKM12}.) 
In subsequent work Parzanchevski \cite{P13} extended this development by removing the assumption of a complete skeleton and showing that concentration of spectra of the so-called Hodge Laplacian in all dimensions implies combinatorial expansion in any complex. 
An important question left open by Parzanchevski et al was whether a {\em converse} to the expander mixing lemma of theirs held true, much in the spirit of the Bilu-Linial converse \cite{BL} to the mixing lemma of Alon-Chung \cite{AC}. 
Our first main theorem in this work provides an answer to this question, by way of an {\em inverse mixing lemma} for simplicial complexes.



\section{Simplicial Complex Setting}
Our main result for simplicial complexes (Theorem \ref{thm:inverse_mixing_simplicial}) requires some notation and preliminaries which are discussed in the next section. 

\subsection{Notation for simplicial complexes}

For a fuller discussion of notation and preliminaries, refer to \cite{PRT13}. Throughout, let $X$ be a $d$-dimensional simplicial complex with vertex set $V$ of size $n$. Let  $X^i$ denote the set of $i$-cells of $X$, where an $i$-cell consists of $i+1$ vertices so that the simplex defined by these points has dimension $i$.  The simplicial complex $X$ is said to have a \emph{complete skeleton} if $X^i = {V\choose i}$ for each $i = 0, \dots, d-1$.
For any subsets $S_0,\dots,S_d\subseteq V$, we write $F(S_0,\dots,S_d)$ for the number of ordered tuples $(s_0, \ldots, s_d) \in S_0\times \dots \times S_d$ such that $\{s_0, \ldots, s_d\} \in X^d$.

If $i > 0$, an $i$-cell $\set{\sigma_0,\dots,\sigma_i}$ has two orientations given by the orderings of its vertices up to an even permutation.  Denote one orientation by $\sigma = (\sigma_0,\dots,\sigma_i)$ and the other orientation by $\overline{\sigma}$. Let  $X^i_{\pm}$ denote the set of all oriented $i$-cells.   Sometimes, abusing this notation, we also use $\sigma$ to refer to the {\em unoriented} $i$-cell which corresponds to the oriented $i$-cells $\sigma$ and $\overline{\sigma}$.

If $\sigma$ is an oriented $i$-cell and $v\in V-\sigma$, we write $v\sigma$ for the oriented $i+1$-cell $(v,\sigma_0,\dots,\sigma_i)$ and we say $v\sim \sigma$ if and only if $v\sigma\in X^{i+1}$.  For $\sigma\in X^{d-1}$ write $\deg(\sigma) \defeq \size{\{v\in V: v\sim \sigma\}}$ for its degree, and say that $X$ is \emph{$r$-regular} if $\deg(\sigma) = r$ for all $\sigma\in X^{d-1}$.

\begin{definition}
    Let $\Omega^i$ be the vector space of real-valued, skew-symmetric functions on $X^i_{\pm}$, i.e., functions $f:X^i_{\pm}\to\R$ so that $f(\overline{\sigma}) = -f(\sigma)$ for all $\sigma \in X^i_{\pm}$.
\end{definition}

For example, we can think of $\Omega^0$ as the set of vertex weightings, and $\Omega^1$ as the set of flow functions.

\begin{definition}
    Define an inner product on $\Omega^i$ by 
    \begin{align}
        \innerprod{f}{g} \defeq \sum_{\sigma\in X^i} f(\sigma)g(\sigma),
    \end{align}
    noting that $f(\sigma)g(\sigma)=f(\overline{\sigma})g(\overline{\sigma})$, and that we only take one of these terms in the sum.
\end{definition}
With this inner product comes an associated norm $\norm{f} \defeq \sqrt{\innerprod{f}{f}}$ on $\Omega^i$.  Recall that for an operator $M:\Omega^i\to \Omega^i$ (or on any normed vector space) we also have an operator norm, 
\begin{align}
    \norm{M} \defeq \sup_{f\in \Omega^i} \frac{\norm{Mf}}{\norm{f}}.
\end{align}

\begin{definition}
    Define the \emph{boundary operator} $\partial_{d-1}:\Omega^{d-1}\to \Omega^{d-2}$ by
    \begin{align}
        (\partial_{d-1}f)(\tau) \defeq \sum_{v\sim \tau} f(v\tau)
    \end{align}
    and let $Z_{d-1} \defeq \ker\partial_{d-1}$.
\end{definition}  

The boundary of a weight function is the total weight of vertices, while the boundary of a flow is the function that assigns to each vertex the net flow at that vertex.  Correspondingly, $Z_0$ is the set of vertex-weightings with weights summing to 0, while $Z_1$ is the set of conservative flows.





\begin{definition}
    For every $d-2$-cell $\tau$, define linear operators $A_{\tau}, J_\tau:\Omega^{d-1}\to \Omega^{d-1}$ by 
    \begin{align}
        (A_{\tau}f)(\sigma) \defeq \begin{cases}
            \displaystyle\sum_{w\sim v\tau} f(w\tau) &\text{if }\sigma = v\tau\\
            \displaystyle\sum_{w\sim v\tau} f(\overline{w\tau}) &\text{if }\sigma = \overline{v\tau}\\
            0 &\text{if }\tau\not\subset \sigma
        \end{cases}
        &&\text{\quad and \quad}&&
        (J_{\tau}f)(\sigma) \defeq \begin{cases}
            \displaystyle\sum_{w\sim \tau} f(w\tau) &\text{if }\sigma = v\tau\\
            \displaystyle\sum_{w\sim \tau} f(\overline{w\tau}) &\text{if }\sigma = \overline{v\tau}\\
            0 &\text{if }\tau\not\subset \sigma.
        \end{cases}
    \end{align}
    Let $A \defeq \sum_{\tau\in X^{d-2}} A_\tau$ be the adjacency operator, and let $J \defeq \sum_{\tau \in X^{d-2}}J_\tau$.  Denote by $I$ the identity operator on $\Omega^{d-1}$.  
\end{definition}

\begin{remark} \label{rmk:matrices}

We could also view these definitions in a more linear-algebraic light.
For each $\sigma \in X^{d-1}$ we can choose a canonical ``positive'' orientation $\sigma = (\sigma_0, \dots, \sigma_{d-1})$.
We identify $\Omega^{d-1}$ with the vector space $\R^{X_+^{d-1}}$ and operators such as the adjacency operator $A$ above could then be considered as matrices indexed by the canonical orientations of $i$-cells.  With these identifications the inner product and norms defined above correspond to their usual counterparts.

The matrix for the adjacency operator $A_\tau$ would be the signed adjacency matrix of the \emph{graph} induced on $d$-cells and $d-1$-cells containing $\tau$, with $w\tau\sim v\tau$ (i.e, a $\pm 1$ in the coordinate corresponding to the positive orientations of $w\tau$ and $ v\tau$) if and only if $wv\tau\in X^d$.  The signs are determined by the orientations of $w\tau$ and $v\tau$ relative to the canonical positive orientations of those cells (for instance, if the positive orientations are $w\tau$ and $\overline{v\tau}$ then the entry is negative).

Similarly, the matrix for $J_\tau$ would have a $\pm 1$ for any pair of $(d-1)$-cells (not necessarily distinct) both containing $\tau$, with the signs determined in the same fashion.

More precisely, if $\sigma$, $\sigma' \in X^{d-1}$ are positively-oriented cells that differ by exactly one vertex, let $\pi_{\sigma,\sigma'}$ be the unique permutation of $\set{0,\dots,d-1}$ so that $\sigma_{\pi_{\sigma,\sigma'}(i)} = \sigma'_{i}$ whenever $\sigma'_i\in \sigma$.  Then we can calculate the $\sigma,\sigma'$ entry of $A$ and $J$:

\begin{align}
        A_{\sigma,\sigma'} = \begin{cases}
            \sgn(\pi_{\sigma,\sigma'}) &\text{if }\sigma\cup\sigma'\in X^d\\
            0     &\text{otherwise.}
        \end{cases}
    &&\text{\quad and\quad }&&
        J_{\sigma,\sigma'} = \begin{cases}
            \sgn(\pi_{\sigma,\sigma'}) &\text{if }\size{\sigma\cup\sigma'} = d+1,\\
            d &\text{if }\sigma = \sigma'\\
            0     &\text{otherwise.}
        \end{cases}           
    \end{align}

         Note that for ease of analysis positive orientations can be chosen for any \emph{particular} $\tau$ to make all of the signs in $A_\tau$ positive, but this cannot be maintained across all $(d-2)$-cells $\tau$ simultaneously and so the matrix for $A$ must exhibit both signs, regardless of the choice of canonical orientations. 
\end{remark}



For graphs each cell has only one orientation, so we have $X^0 = V$ and $\Omega^0 = \R^V$ and we can think of the usual adjacency matrix of a graph as an operator $A:\R^V\to \R^V$.  Indeed, for $d=1$ the only $(-1)$-cell is the empty set, and so $A = A_\emptyset$ is just the adjacency matrix of the graph, while $J_\emptyset$ is the all-ones matrix.



\begin{definition}
    Finally, define the \emph{degree operator} $D:\Omega^{d-1}\to\Omega^{d-1}$ by 
    \begin{align}
        (Df)(\sigma) \defeq \deg(\sigma) f(\sigma),
    \end{align}
    and define $\Delta^+ \defeq D - A$.
\end{definition}

Note that for $d=1$, $\Delta^+$ is the graph Laplacian.


\subsection{Mixing Lemmas for Simplicial Complexes}

The following is a recent mixing lemma due to Parzanchevski, Rosenthal \& Tessler:

\begin{theorem}[Mixing Lemma for simplicial complexes, \citet{PRT13}]
    \label{thm:mixing_simplicial}
    Let $X$ be a $d$-dimensional complex with a complete skeleton and fix $\alpha\in \R$.  For any disjoint sets $S_0,\dots,S_d\subseteq V$, 
    \begin{align}
        \abs{F(S_0,\dots,S_d)-\tfrac{\alpha}{n} \size{S_0} \dots \size{S_d}} 
        \leq \rho_{\alpha}\sqrt{\size{S_0}\size{S_1}}\size{S_2}\dots \size{S_d},
    \end{align}
    where $\rho_{\alpha} \defeq \norm{(\alpha I-\Delta^+)|_{Z_{d-1}}}$.
\end{theorem}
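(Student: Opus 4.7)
The plan is to encode the sets $S_0,\ldots,S_d$ as skew-symmetric ``transversal indicator'' functions in $\Omega^{d-1}$, rewrite $F(S_0,\ldots,S_d)$ as an inner product involving $\alpha I - \Delta^+$, and reduce the inequality to an operator-norm bound on $Z_{d-1}$ via the orthogonal decomposition $\Omega^{d-1} = Z_{d-1}\oplus Z_{d-1}^\perp$.

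Concretely, I would define $f_0, f_1 \in \Omega^{d-1}$ by
\begin{align}
    f_i(\sigma_0,\ldots,\sigma_{d-1}) \defeq \sum_{\pi} \sgn(\pi) \prod_{j=0}^{d-1} \mathbf{1}_{S_{\pi(j)}}(\sigma_j),
\end{align}
where $\pi$ runs over bijections $\{0,\ldots,d-1\} \to \{i,2,\ldots,d\}$. Disjointness of the $S_j$ forces $f_i(\sigma) \in \{-1,0,+1\}$, with $\abs{f_i(\sigma)}=1$ exactly when $\sigma$ meets each of $S_i, S_2, \ldots, S_d$ in one vertex. Hence $\norm{f_i}^2 = \abs{S_i}\abs{S_2}\cdots\abs{S_d}$, and $\innerprod{f_0}{f_1}=0$ because $S_0\cap S_1=\emptyset$ makes the supports of $f_0$ and $f_1$ disjoint.

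The heart of the argument is the verification, from the matrix formulas in Remark \ref{rmk:matrices}, of two counting identities:
\begin{align}
    \innerprod{Af_0}{f_1} = F(S_0,\ldots,S_d), \qquad \innerprod{Jf_0}{f_1} = \abs{S_0}\abs{S_1}\cdots\abs{S_d}.
\end{align}
Nonzero contributions in both sums come only from pairs $\sigma=\tau\cup\{s_1\}$, $\sigma'=\tau\cup\{s_0\}$, where $s_i\in S_i$ and $\tau$ is a $(d-2)$-cell containing one vertex from each $S_j$ for $j\geq 2$. The difference is that $A$ further demands $\sigma\cup\sigma' \in X^d$, whereas $J$ imposes no such restriction (all such $(d-1)$-faces exist by the complete skeleton assumption). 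Since $\innerprod{Df_0}{f_1}=0$ and $\innerprod{f_0}{f_1}=0$, the first identity yields $\innerprod{(\alpha I - \Delta^+)f_0}{f_1} = F(S_0,\ldots,S_d)$.

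To finish, I would invoke two standard Hodge-theoretic facts: $\Delta^+$ maps $\Omega^{d-1}$ into $Z_{d-1}$ and hence (by self-adjointness) vanishes on $Z_{d-1}^\perp$; and $J|_{Z_{d-1}^\perp} = nI$, obtained by comparing with the complete $d$-complex $K_n^{(d)}$, where $\Delta^+ + J = nI$, noting that $J$ is insensitive to the choice of $X^d$. Both can be checked directly from $\Delta^+ = D-A$ and the formulas in Remark \ref{rmk:matrices}. Decomposing $f_i = f_i^Z + f_i^\perp$ along $Z_{d-1}\oplus Z_{d-1}^\perp$, the cross terms in $\innerprod{(\alpha I - \Delta^+)f_0}{f_1}$ vanish, and the second identity gives $\alpha\innerprod{f_0^\perp}{f_1^\perp} = \tfrac{\alpha}{n}\innerprod{Jf_0}{f_1} = \tfrac{\alpha}{n}\abs{S_0}\cdots\abs{S_d}$. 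Thus
\begin{align}
    F(S_0,\ldots,S_d) - \tfrac{\alpha}{n}\abs{S_0}\cdots\abs{S_d} = \innerprod{(\alpha I - \Delta^+)f_0^Z}{f_1^Z},
\end{align}
and Cauchy-Schwarz with the definition of $\rho_\alpha$ bounds the right-hand side by $\rho_\alpha\norm{f_0^Z}\norm{f_1^Z} \leq \rho_\alpha\norm{f_0}\norm{f_1} = \rho_\alpha\sqrt{\abs{S_0}\abs{S_1}}\abs{S_2}\cdots\abs{S_d}$. The main obstacle is the orientation and sign bookkeeping in the two counting identities: one must verify that $\sgn(\pi_{\sigma,\sigma'})$ from $A$ and $J$ conspires with the skew-symmetric signs inside $f_0,f_1$ to produce exactly $+1$ per counted configuration, independent of the choice of canonical positive orientations.
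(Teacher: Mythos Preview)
The paper does not actually prove this theorem; it is quoted from \cite{PRT13} and only stated (with the remark following it that the formulation here is slightly sharper than the original), so there is no in-paper proof to compare against. That said, your outline is correct and is essentially the argument of \cite{PRT13}: encode the disjoint sets $S_0,\dots,S_d$ as $\{0,\pm1\}$-valued cochains $f_0,f_1\in\Omega^{d-1}$, identify $\innerprod{Af_0}{f_1}$ with $F(S_0,\dots,S_d)$ and $\innerprod{Jf_0}{f_1}$ with $\prod_i\size{S_i}$, and then use the Hodge-type facts that $\Delta^+$ vanishes on $Z_{d-1}^\perp$ (since $\mathrm{im}\,\Delta^+\subseteq\ker\partial_{d-1}$) together with $J|_{Z_{d-1}^\perp}=nI$ (your identity $J=nI-\Delta^+$ on the complete complex is exactly the right way to see this) to isolate the $Z_{d-1}$ component and apply the definition of $\rho_\alpha$ via Cauchy--Schwarz. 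The orientation/sign bookkeeping you flag as the main obstacle is indeed routine once one fixes a consistent choice of positive orientations, and no further ideas are needed.
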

This is not quite the statement given in the original paper, which concludes with a slightly looser but more symmetric result.  Note that if $X$ is $r$-regular and $\alpha = r$ then $\rho_\alpha$ is the second-largest eigenvalue of $A = r I - \Delta^+$.

The first result we prove is an inverse of the mixing lemma for simplicial complexes.

\begin{theorem}[Inverse Mixing Lemma for simplicial complexes]
    \label{thm:inverse_mixing_simplicial}
    Let $X$ be a $d$-dimensional, $r$-regular simplicial complex with a complete skeleton, and suppose that for every collection of disjoint sets $S_0,\dots,S_d\subseteq V$
    \begin{align}
        \label{eqn:simplicial_rho}
        \abs{F(S_0,\dots,S_d)-\tfrac{r}{n}\size{S_0}\dots \size{S_d}} 
        \leq \rho\sqrt{\size{S_0}\size{S_1}} \size{S_2}\dots \size{S_d}.
    \end{align}
 
    Then 
    \begin{align}
        \norm{A\big|_{Z_{d-1}}} = O(\rho d (\log(r/\rho)+1) + d).
    \end{align}
\end{theorem}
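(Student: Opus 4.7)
The approach is to lift the Bilu--Linial rounding argument from graphs to the simplicial setting. Since $A$ is self-adjoint on $\Omega^{d-1}$ and $Z_{d-1}$ is $A$-invariant, $\norm{A|_{Z_{d-1}}}=\sup|\innerprod{Af}{g}|$ over unit $f,g\in Z_{d-1}$, so fixing a near-extremal such pair $f,g$ it suffices to bound this bilinear form using only the discrepancy hypothesis \eqref{eqn:simplicial_rho}.

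The natural building blocks are \emph{simplicial rectangles}. For pairwise disjoint $S_1,\dots,S_d\subseteq V$, define $\mathbf{1}_{S_1,\dots,S_d}\in\Omega^{d-1}$ to equal $\sgn(\pi)$ on an oriented cell $(v_1,\dots,v_d)$ whenever there is a (necessarily unique) permutation $\pi$ with $v_i\in S_{\pi(i)}$, and $0$ otherwise. These functions are skew-symmetric, and any $h\in\Omega^{d-1}$ with finitely many values decomposes as a signed combination of such rectangles, once one fixes a vertex partition of $V$ refining the level sets of $h$. I would then discretize $f$ and $g$: rescale so that $\norm{f}_\infty=\norm{g}_\infty=1$ on $X^{d-1}$, discard cells on which $\min(|f|,|g|)<\rho/r$, and round the survivors to the nearest dyadic scale $\pm 2^{-k}$ with $k=0,1,\dots,O(\log(r/\rho))$. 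The discarded tail contributes at most $\norm{A}\cdot\rho/r=O(\rho)$ via the trivial bound $\norm{A}\le O(r)$. Each of $f,g$ is then expressed as a signed sum of simplicial rectangles across $O(\log(r/\rho))$ dyadic scales.

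The core estimate controls $|\innerprod{A\,\mathbf{1}_{S_1,\dots,S_d}}{\mathbf{1}_{T_1,\dots,T_d}}|$. Expanding this inner product as a signed sum over $d$-cells and grouping by intersection type $R_{jk}:=S_j\cap T_k$ converts it into a signed combination of counts $F(R_0,\dots,R_d)$, with each $R_i$ of the form $R_{jk}$. The hypothesis \eqref{eqn:simplicial_rho} replaces each $F(R_0,\dots,R_d)$ by $(r/n)\prod_i|R_i|$ with an error bounded by $\rho\sqrt{|R_0||R_1|}|R_2|\cdots|R_d|$. The key cancellation is that the principal terms $(r/n)\prod_i|R_i|$ reassemble into the action of $(r/n)J$ rather than $A$; since the image of $J$ lies in the orthogonal complement of $Z_{d-1}=\ker\partial_{d-1}$, this piece pairs trivially against $f,g\in Z_{d-1}$ and drops out, leaving only the discrepancy error.

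Summing over all pairs of dyadic scales, a naive count yields $O(\rho\log^2(r/\rho))$. The improvement to $O(\rho d(\log(r/\rho)+1))$ should come from exploiting the asymmetry of the bound: only two of the $d+1$ coordinates enjoy the $\sqrt{|\cdot|}$ savings, so Cauchy--Schwarz lets one ``integrate out'' the other $d-1$ coordinates and pay only a single log factor in the end. The factor $d$ enters through the $\binom{d+1}{2}$ pairings of facets in a $d$-cell and the freedom in selecting which two coordinates to distinguish, while the additive $+d$ absorbs the rounding residual and bookkeeping. I expect the main obstacle to be the third step above: tracking signs through the antisymmetrization in the simplicial rectangles, identifying precisely which combination of $F(\cdot)$ counts produces $\innerprod{A\,\mathbf{1}_{S_1,\dots,S_d}}{\mathbf{1}_{T_1,\dots,T_d}}$, and verifying that the principal terms really do assemble into $(r/n)J$ so that restriction to $Z_{d-1}$ annihilates them cleanly.
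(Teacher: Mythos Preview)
Your plan has the right overall shape, but two of the steps do not go through as written, and the paper's route is both simpler and avoids these obstacles.

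First, the cancellation of the principal $(r/n)$ terms is placed in the wrong order. You round $f,g\in Z_{d-1}$ to dyadic $\tilde f,\tilde g$ and then invoke $J|_{Z_{d-1}}=0$; but $\tilde f,\tilde g$ are no longer in $Z_{d-1}$, so $\innerprod{\tilde f}{J\tilde g}$ need not vanish. The fix is to subtract $(r/n)J$ at the operator level \emph{before} any rounding: since $J|_{Z_{d-1}}=0$ one has $\norm{A|_{Z_{d-1}}}\le\norm{A-(r/n)J}$, and then one bounds the right-hand side over all of $\Omega^{d-1}$ with no subspace constraint to preserve. The paper does exactly this, applying the Bilu--Linial lemma as a black box to $B=A-(r/n)J+(rd/n)I$ (the $(rd/n)I$ zeros the diagonal, and its norm $\le d$ is the source of the additive $+d$).

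Second, and more seriously, your ``simplicial rectangle'' decomposition is not efficient. A dyadic level set of $\tilde f$ is an arbitrary subset of $X^{d-1}$, not a product of vertex sets; there is no vertex partition that ``refines'' it (the level sets live in $X^{d-1}$, not in $V$), and a generic indicator on $(d-1)$-cells need not be a short signed combination of rectangles. The paper bypasses this with a different decomposition: write $A-(r/n)J=\sum_{\tau\in X^{d-2}}(A_\tau-(r/n)J_\tau)$. For fixed $\tau$, a $\{0,\pm1\}$-valued $x\in\Omega^{d-1}$ restricted to cells containing $\tau$ becomes a $\{0,\pm1\}$ function on \emph{vertices} via $v\mapsto x(v\tau)$; splitting by sign yields two disjoint vertex sets, and one applies the hypothesis with $S_2,\dots,S_d$ equal to the \emph{singletons} of $\tau$ to get $|\innerprod{x_\tau}{(A_\tau-(r/n)J_\tau)y_\tau}|\le 2\rho\,\norm{x_\tau}\norm{y_\tau}$. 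Summing over $\tau$ by Cauchy--Schwarz (each $(d-1)$-cell lies in exactly $d$ of the $x_\tau$) produces the factor $d$ and the single bound $|\innerprod{x}{By}|\le 2\rho d\,\norm{x}\norm{y}$ for disjointly supported $\{0,\pm1\}$ vectors---precisely the Bilu--Linial hypothesis, so the $\log$ appears once, not twice. The missing idea is this link decomposition by $(d-2)$-cells, which reduces everything to graph-level discrepancy and dissolves both of your obstacles.
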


Again, when the complex is regular with a complete skeleton this quantity is the second-largest eigenvalue of $A$.

\begin{remark}
    It is possible to generalize this result by replacing the $r$ in (\ref{eqn:simplicial_rho}) with an arbitrary value $\alpha\in \R$, and by replacing $A$ with $\alpha I - \Delta^+$, as in the statement of \ref{thm:mixing_simplicial}.
\end{remark}

\begin{remark}
    Adding or removing $d-1$-cells does not change the eigenvalues of $A$, but does change $Z_{d-1}$.  The complete skeleton requirement is not necessary to make the statement true of the second-largest eigenvalue of $A$, but \emph{is} necessary to make $\lambda_2 = \norm{A|_{Z_{d-1}}}$.  Observe that $Z_{d-1}$ is basically the space of functions orthogonal to the all-ones function, but the eigenfunction of $A$ corresponding to eigenvalue $r$ has zeroes where there are isolated (i.e. missing) $(d-1)$-cells.
\end{remark}

\begin{remark}
In our proof, we do not use the full strength of the hypothesis.  We will always take $S_2,\dots, S_d$ to be singletons.
\end{remark}




\begin{proof} It is clear from the graph interpretation of $A_\tau$ that the largest eigenvalue of $A_{\tau}$ is $r$, with eigenfunction $f(v\tau) = 1$ (and $f(\overline{v\tau}) = -1$) if $v\sim \tau$ and $f(\sigma) = 0$ if $\tau\not\subset\sigma$.  We will bound $\norm{A_{\tau}-\tfrac{r}{n}J_{\tau}}$, which is an approximation of the second eigenvalue of $A_\tau$.

    First we argue that $J_\tau|_{Z_{d-1}} = 0$.  To see this, consider $f\in Z_{d-1}$, $\tau\in X^{d-2}$, and $\sigma \in X^{d-1}$.  If $\tau\not\subset \sigma$ then $(J_{\tau}f)(\sigma) = 0$.  On the other hand, if $\tau\subset\sigma$ then we can write $\sigma = v\tau$ for some $v\notin \tau$.  In this case
    \begin{align}
        (J_{\tau}f)(\sigma) 
        = \sum_{w\sim \tau} f(w\tau) 
        = (\partial_{d-1}f)(\tau) = 0,
    \end{align}
    so we have $J_{\tau}f\equiv 0$ for every $f\in Z_{d-1}$, or in other words $J_{\tau}|_{Z_{d-1}} = 0$.
  
    \ 
   
    This allows us to say that 
    \begin{align}
        \norm{A |_{Z_{d-1}}} 
        &= \norm{\parens{A-\sum_{\tau\in X^{d-2}}\tfrac{r}{n}J_{\tau}}\Big|_{Z_{d-1}}} \\
        &\leq \norm{A-\sum_{\tau\in X^{d-2}}\tfrac{r}{n}J_{\tau}} \\
        &= \norm{\sum_{\tau\in X^{d-2}}\parens{A_{\tau}-\tfrac{r}{n}J_{\tau}}}.\label{eqn:sum_to_bound}
    \end{align}
  
    What remains is to bound \eqref{eqn:sum_to_bound}. 
    We use a lemma of Bilu \& Linial:
  
    \begin{lemma}[\citet{BL}]
        \label{thm:bilu_linial}
        Let $B$ be a symmetric, real-valued $n\times n$ matrix in which the diagonal entries are all $0$.  Suppose that the $\ell^1$-norm of every row of $B$ is $O(m)$, and also that for any vectors $x,y\in \set{0, 1}^n$ with disjoint support
        \begin{align}\label{eqn:bl}
            \abs{\innerprod{x}{By}} \leq \beta\norm{x}\norm{y}.
        \end{align}
        Then 
        \begin{align}
            \norm{B} = O(\beta(\log(m/\beta)+1)).
        \end{align}
    \end{lemma}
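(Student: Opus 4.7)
The plan is to bound $\norm{B}$ by decomposing the vectors witnessing the operator norm into dyadic level sets and then applying \eqref{eqn:bl} to 0/1 indicators. Fix unit vectors $u, v \in \R^n$ with $|\innerprod{u}{Bv}| = \norm{B}$. Writing $u = u^+ - u^-$ and $v = v^+ - v^-$ in terms of their positive and negative parts (each pair disjointly supported, $\ell^2$ norm at most $1$), the form $\innerprod{u}{Bv}$ splits into four terms of the shape $\innerprod{a}{Bb}$ with $a, b \geq 0$, so by the triangle inequality it suffices to bound a single such term. For such $a$, define the level sets $S_k = \set{i : 2^{-k-1} < a_i \leq 2^{-k}}$ with indicator $\chi^{(k)} \in \set{0,1}^n$; analogously define $T_k, \eta^{(k)}$ for $b$. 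Since $\norm{a}_2^2 \geq 4^{-k-1}|S_k|$, setting $\gamma_k \defeq 2^{-k}\sqrt{|S_k|}$ (and $\delta_k$ analogously) gives $\sum_k \gamma_k^2 \leq 4$ and $\sum_k \delta_k^2 \leq 4$.

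Expanding $\innerprod{a}{Bb} = \sum_{j,k}\innerprod{a|_{S_j}}{B\, b|_{T_k}}$, each term satisfies $|\innerprod{a|_{S_j}}{B\, b|_{T_k}}| \leq 2^{-j-k}|\innerprod{\chi^{(j)}}{B\eta^{(k)}}|$ after a sign-handling step that copes with the mixed signs of $B$. The key estimate is then $|\innerprod{\chi^{(j)}}{B\eta^{(k)}}| = O(\beta\sqrt{|S_j||T_k|})$: when $S_j \cap T_k = \emptyset$ it is immediate from \eqref{eqn:bl}, and otherwise I split $\chi^{(j)}$ into its restrictions to $S_j\setminus T_k$ and $S_j\cap T_k$ (and similarly $\eta^{(k)}$), yielding three cross-support contributions handled by \eqref{eqn:bl} plus a diagonal residual $\innerprod{\mathbf{1}_W}{B\mathbf{1}_W}$ with $W = S_j\cap T_k$. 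For this diagonal piece I invoke a random-partition trick: a uniformly random partition $W = A\sqcup A^c$ satisfies $\E[\innerprod{\mathbf{1}_A}{B\mathbf{1}_{A^c}}] = \tfrac{1}{4}\innerprod{\mathbf{1}_W}{B\mathbf{1}_W}$ (using that $B$ has zero diagonal), so some partition achieves $|\innerprod{\mathbf{1}_W}{B\mathbf{1}_W}| \leq 4\beta\sqrt{|A||A^c|} \leq 2\beta|W|$, giving the same $\beta$-type bound up to constants.

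Summing over bucket pairs yields $|\innerprod{a}{Bb}| \leq O(\beta)\parens{\sum_j \gamma_j}\parens{\sum_k \delta_k}$, and the Cauchy--Schwarz estimate $\sum_{j \leq L}\gamma_j \leq \sqrt{L\sum_j \gamma_j^2} = O(\sqrt{L})$ turns the first $L$ buckets on each side into a contribution of $O(\beta L)$. I choose the truncation level $L = \lceil \log_2(m/\beta)\rceil + O(1)$, so that all tail buckets have $2^{-k} \lesssim \beta/m$. In this regime I switch to the alternative row-sum bound $|\innerprod{\chi^{(j)}}{B\eta^{(k)}}| \leq O(m)\min(|S_j|, |T_k|)$, which now dominates the hypothesis bound and produces a geometrically decaying series whose total is an additional $O(\beta)$. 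Combining head and tail gives $\norm{B} = O(\beta(\log(m/\beta) + 1))$. The main obstacle is this last balancing step: verifying that the row-sum bound is strictly better than \eqref{eqn:bl} precisely when $2^{-k} \lesssim \beta/m$, and ensuring the resulting tail series telescopes to $O(\beta)$ rather than accumulating an undesired $\log n$ factor.
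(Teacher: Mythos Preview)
The paper does not prove this lemma; it is quoted from \cite{BL} and applied as a black box. So there is no in-paper proof to compare against directly. That said, the paper \emph{does} prove a $k$-linear analogue (Lemma~\ref{lem:powers_of_two}), and the structure of that argument exposes the gap in your sketch.

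Your dyadic decomposition, the random-partition trick for the diagonal block $\innerprod{\mathbf{1}_W}{B\mathbf{1}_W}$, and the head estimate $\sum_{j,k\leq L}O(\beta)\gamma_j\delta_k = O(\beta L)$ are all fine. The problem is the tail. You truncate by individual level ($j,k\leq L$ versus the rest) and hope the row-sum bound handles what remains. It does not: with your split the tail is
\[
\sum_{j}\sum_{k>L} 2^{-j-k}\,O(m)\min(|S_j|,|T_k|),
\]
and bounding $\min\leq |T_k|$ and $\sum_j 2^{-j}\leq 2$ leaves $O(m)\sum_{k>L}2^{-k}|T_k|$. The only general control on $|T_k|$ is $|T_k|\leq n$ (the bound $\sum_k\delta_k^2\leq 4$ gives $|T_k|\leq 4\cdot 4^k$, which is worse), so this sum is $O(n\beta/m)\cdot O(m)=O(n\beta)$, not $O(\beta)$. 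Your stated criterion ``$2^{-k}\lesssim\beta/m$ makes the row-sum bound dominate'' is not what governs the comparison: whether $m\min(|S_j|,|T_k|)\leq\beta\sqrt{|S_j||T_k|}$ depends on the ratio of the set sizes, not on $k$ alone.

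The correct split (used by Bilu--Linial, and visible in the proof of Lemma~\ref{lem:powers_of_two} for general $k$) is on the \emph{gap} $|j-k|$, not on the levels individually. Put $P=\{(j,k):|j-k|<\gamma\}$ and $Q=\{(j,k):|j-k|\geq\gamma\}$. On $P$ the hypothesis \eqref{eqn:bl} plus AM--GM gives $O(\beta\gamma)$. On $Q$, say $k\geq j+\gamma$: bound $2^{-k}\leq 2^{-j-\gamma}$ and then use disjointness of the supports of the $\eta^{(k)}$ to collapse $\sum_{k}\abs{\innerprod{\chi^{(j)}}{B\eta^{(k)}}}\leq O(m)|S_j|$, yielding
\[
\sum_{(j,k)\in Q} 2^{-j-k}\abs{\innerprod{\chi^{(j)}}{B\eta^{(k)}}}
\;\leq\; O(m)\,2^{-\gamma}\sum_j 2^{-2j}|S_j|
\;=\; O(m\,2^{-\gamma}).
\]
Now $\gamma=\log(m/\beta)$ balances the two pieces to $O(\beta(\log(m/\beta)+1))$. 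The essential point is that the saving factor $2^{-\gamma}$ comes from the \emph{difference} of levels, which is exactly what your head/tail truncation throws away.
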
  
  
  
  
    We will apply this lemma to 
    \begin{align}
        B = A-\tfrac{r}{n}J + \tfrac{rd}{n}I.
    \end{align} 
 

As mentioned in Remark \ref{rmk:matrices}, we can interpret $B$ as a matrix indexed by positive orientations of elements in $X^{d-1}$.  Combining the calculations of $A$ and $J$ in that remark, we can calculate that for each $\sigma,\sigma'\in X^{d-1}$,
    \begin{align}
        B_{\sigma,\sigma'} = \begin{cases}
            \sgn(\pi_{\sigma,\sigma'})\parens{1-r/n} &\text{if }\sigma\cup\sigma'\in X^d\\
            \sgn(\pi_{\sigma,\sigma'})\parens{-r/n}  &\text{if }\sigma\cup\sigma'\in \binom{V}{d+1}\setminus X^d\\
            0     &\text{otherwise.}
        \end{cases}
    \end{align}
  
    Then we can see that $B$ is symmetric (because $\pi_{\sigma',\sigma} = \pi_{\sigma,\sigma'}^{-1}$), real-valued  and its diagonal entries are 0. Since $X$ is $r$-regular,  the $\ell^1$-norm of each row $\sigma$ in $B$ is 
    \begin{align}
        \sum_{\sigma'\in X^{d-1}}\abs{B_{\sigma, \sigma'}} 
        &=\abs{\sgn(\pi_{\sigma, \sigma'})dr\parens{1-\tfrac{r}{n}} + \sgn(\pi_{\sigma, \sigma'})(n-d-r)d\tfrac{r}{n}} \\
        &= dr\parens{1-\tfrac{r}{n}} + (n-d-r)d\tfrac{r}{n} \leq 2dr.
    \end{align}
    Indeed, there are $(n-d)$ total sets $\eta$ of size $d+1$ containing $\sigma$, and $r$ of those are $d$-cells; each such set $\eta$ contains $d$ other cells $\sigma'$ such that $\sigma\cup\sigma'=\eta$.
  
    Let $x, y: X^{d-1}_{\pm}\to \set{0,\pm 1}$ be functions in $\Omega^{d-1}$ with disjoint support, so that clearly $\innerprod{x}{Iy} = 0$.  For each $\tau\in X^{d-2}$, define $x_{\tau}(\sigma) = x(\sigma)$ if $\tau\subset\sigma$ and $x_{\tau}(\sigma) = 0$ otherwise.  Define $y_{\tau}$ similarly.  Note that each $\sigma\in\supp x$ is in the support of exactly $d$ of the functions $x_\tau$.
    Observe that 
    \begin{align}
        \innerprod{x}{By} 
        &= \innerprod{x}{\parens{\sum_{\tau\in X^{d-2}}A_{\tau}-\tfrac{r}{n}J_{\tau}}y} + \tfrac{rd}{n} \innerprod{x}{Iy} \\
        &= \sum_{\tau\in X^{d-2}}\innerprod{x}{\parens{A_{\tau} - \tfrac{r}{n}J_{\tau}}y} 
        =  \sum_{\tau\in X^{d-2}}\innerprod{x_{\tau}}{A_{\tau} y_{\tau}} - \tfrac{r}{n} \innerprod{x_{\tau}}{J_{\tau}y_{\tau}}.
    \end{align}

    


By definition, for any fixed $\tau = (\tau_2,\dots,\tau_d)$
\begin{align}
    \innerprod{x_\tau}{A_\tau y_\tau} 
    &= \sum_{\sigma\in X^{d-1}} x_\tau(\sigma) (A_\tau y_\tau)(\sigma)
    =\sum_{v\not\in\tau} x(v\tau) (A_\tau y_\tau)(v\tau)\\
    &= \sum_{v\not\in\tau} x(v\tau) \sum_{w\sim v\tau} y_\tau(w\tau)
    = \sum_{v,w} 1_{[vw\tau\in X^d]}x(v\tau)y(w\tau).
\end{align}
Using a similar decomposition for $\innerprod{x_\tau}{J_\tau y_\tau}$ we obtain
\begin{align}
    \innerprod{x_\tau}{(A_\tau - \tfrac{r}{n} J_\tau) y_\tau} 
    = \mathop{\sum_{v\neq w}}_{v,w\not\in\tau} (1_{[vw\tau\in X^d]} - \tfrac{r}{n})x(v\tau)y(w\tau).
\end{align}
We would like to interpret the first half of the sum as a number of edges and the second half as the product of the sizes of some vertex sets, since for any disjoint sets $S_0,S_1$ we have by assumption that
\begin{align}
    \Abs[\Big]{\mathop{\sum_{v\in S_0}}_{w\in S_1} (1_{[vw\tau\in X^d]} - \tfrac{r}{n})}
    &= \abs{F(S_0,S_1,\set{\tau_2},\dots,\set{\tau_d}) - \tfrac{r}{n} \size{S_0}\size{S_1}\size{\set{\tau_2}}\dots\size{\set{\tau_d}}}\\
    &\leq \rho \sqrt{\size{S_0}\size{S_1}}.
\end{align}
However, this differs from what we have above by the signs from $x$ and $y$.  Instead we break the sum apart according to these values, and for each $\eta\in\set{\pm 1}$ we write
\begin{align}
S_0^\eta &= \set{v : x(v\tau) = \eta} & S_1^\eta = \set{w : y(w\tau) = \eta}.
\end{align}
These four sets are pairwise disjoint and $\norm{x_\tau}^2 = \size{S_0^+} + \size{S_0^-}$ and $\norm{y_\tau}^2 = \size{S_1^+} + \size{S_1^-}$, so now we can write
\begin{align}
    \abs{\innerprod{x_\tau}{(A_\tau - \tfrac{r}{n} J_\tau) y_\tau}}
    &= \Abs[\Big]{\sum_{v,w} 1_{vw\tau\in X^d}x(v\tau)y(w\tau)}
    = \Abs[\Big]{\sum_{\eta_0,\eta_1\in \set{\pm 1}} \eta_0\eta_1 \mathop{\sum_{v\in S_0^{\eta_0}}}_{w\in S_1^{\eta_1}} (1_{[vw\tau\in X^d]} - \tfrac{r}{n})}\\
    &\leq \sum_{\eta_0,\eta_1\in \set{\pm 1}} \rho\sqrt{\size{S_0^{\eta_0}} \size{S_1^{\eta_1}}} = \rho \sum_{\eta_0\in\set{\pm1}} \sqrt{\size{S_0^{\eta_0}}} \sum_{\eta_1\in\set{\pm1}} \sqrt{\size{S_1^{\eta_1}}}\\
    &\leq \rho \sqrt{2 \sum_{\eta_0\in\set{\pm1}} \size{S_0^{\eta_0}}}\ \sqrt{2 \sum_{\eta_0\in\set{\pm1}} \size{S_0^{\eta_0}}}
    = 2\rho \norm{x_\tau}\norm{y_\tau}
\end{align}
by Cauchy-Schwarz.
    Summing over all $\tau\in X^{d-2}$ gives that 
    \begin{align}
        \abs{\innerprod{x}{ By}}
        &=\abs{\sum_{\tau\in X^{d-2}}\innerprod{x_{\tau}}{(A_{\tau} - \tfrac{r}{n}J_{\tau})y_{\tau}}} \\
        &\leq \sum_{\tau\in X^{d-2}} \abs{\innerprod{x_{\tau}}{(A_{\tau}-\tfrac{r}{n} J_{\tau})y_{\tau}}}\\
        &\leq \sum_{\tau\in X^{d-2}} 2\rho\sqrt{\size{\supp x_{\tau}}\size{\supp y_{\tau}}}\\
        &\leq 2\rho \sqrt{\sum_{\tau\in X^{d-2}}\size{\supp x_{\tau}}}\ \sqrt{\sum_{\tau\in X^{d-2}}\size{\supp y_{\tau}}} \label{eqn:cauchy_schwarz}\\
        &=2\rho\sqrt{d\size{\supp x}}\ \sqrt{d\size{\supp y}} \\
        &=2\rho d \norm{x}\norm{y},
    \end{align}
    where the inequality in \eqref{eqn:cauchy_schwarz} follows from Cauchy-Schwarz.
  
    Finally we can apply Lemma \ref{thm:bilu_linial} with $m = 2r d$ and $\beta = 2\rho d$ to get
    \begin{align}
        \norm{\parens{\sum_{\tau\in X^{d-2}}A_{\tau}-\tfrac{r}{n}J_{\tau}}+\tfrac{rd}{n}I}
        = O(\rho d (\log(r/\rho)+1)).
    \end{align}  
    Combining the results for each $\tau$ using the triangle inequality gives
    \begin{align}
        \norm{A|_{Z_{d-1}}} 
        \leq \norm{\sum_{\tau\in X^{d-2}}A_{\tau}-\tfrac{r}{n}J_{\tau}}
        = O(\rho d (\log(r/\rho)+1)) + \tfrac{rd}{n} 
        = O(\rho d (\log(r/\rho)+1) + d).
    \end{align}
  
\end{proof}
  
As long as $\emptyset\subsetneq X^d \subsetneq \binom{V}{d+1}$, $\rho\geq \max\set{1-\tfrac{r}{n},\tfrac{r}{n}} \geq 1/2$ (take $S_0, \dots, S_d$ to be singletons corresponding to a subset which is either a $d$-cell or not), so $d = O(\rho d)$ and we can replace the above bound by
\begin{align}
    \norm{A|_{Z_{d-1}}} = O(\rho d (\log(r/\rho)+1)).
\end{align}
The same bound holds trivially for the empty complex (which has $A=0$), while for the complete complex (with $r = n-d$) we can take $S_2, \dots, S_d$ to be singletons and $\size{S_0} = \size{S_1} = \ceil{\frac{n-d}{2}}$, to get
\begin{align}
    \rho &\geq \frac{F(S_0,\dots,S_d) - \frac{r}{n} \size{S_0}\dots\size{S_d}}{\sqrt{\size{S_0}\size{S_1}}\size{S_2}\dots\size{S_d}}
    = \frac{d}{n}\sqrt{\size{S_0}\size{S_1}}
    = \frac{d(n-d)}{2n}
    \geq \frac{1}{4}
\end{align}
when $1\leq d < n$, so this simpler bound holds in general.

\begin{corollary}
    Let $X$ be an $r$-regular, $d$-dimensional complex with a complete skeleton, and suppose that for every collection of disjoint sets $S_0,\dots,S_d\subseteq V$
  
    \begin{align}
        \abs{F(S_0,\dots,S_d)-\tfrac{r}{n}\size{S_0}\dots \size{S_d}} 
        \leq \rho\sqrt{\size{S_0}\size{S_1}} \size{S_2}\dots \size{S_d}.
    \end{align}
 
    Then 
    \begin{align}
        \norm{A\big|_{Z_{d-1}}} = O(\rho d (\log(r/\rho)+1)).
    \end{align}
\end{corollary}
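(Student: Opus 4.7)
The plan is to deduce this corollary directly from Theorem~\ref{thm:inverse_mixing_simplicial}, which already gives the bound $\norm{A|_{Z_{d-1}}} = O(\rho d(\log(r/\rho)+1) + d)$. Since the only difference between that statement and the corollary is the elimination of the additive $+d$ term, the entire task reduces to showing that $d = O(\rho d)$, i.e.\ that $\rho$ is bounded below by an absolute positive constant whenever the bound is not already trivial.

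I would split the analysis into three cases based on the structure of $X^d$. First, in the degenerate case $X^d = \emptyset$, the adjacency operator $A$ is zero, so the bound holds trivially. Second, in the ``generic'' case $\emptyset \subsetneq X^d \subsetneq \binom{V}{d+1}$, I would test the mixing hypothesis against a specific family of singleton sets. Concretely, by choosing $S_0, \dots, S_d$ to be singletons corresponding to the vertices of a $(d+1)$-subset that either is or is not a $d$-cell, the left-hand side of the mixing inequality is either $1 - r/n$ or $r/n$, while the right-hand side is $\rho$. This forces $\rho \geq \max\set{r/n,\, 1-r/n\} \geq 1/2$, so $d \leq 2\rho d$ and the $+d$ term can be absorbed.

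The only remaining case is the complete complex with $r = n-d$, where the singleton choice above gives $\rho \geq 0$ and so does not suffice. Here I would instead take $S_2, \dots, S_d$ to be singletons consisting of the vertices of some fixed $(d-1)$-cell $\tau$, and take $S_0, S_1$ to be disjoint subsets of $V \setminus \tau$ of size $\ceil{(n-d)/2}$ each. Then $F(S_0,\dots,S_d) = \size{S_0}\size{S_1}$ (every such tuple is a $d$-cell), and a direct computation shows
\begin{align}
    \rho \geq \frac{(1 - \tfrac{r}{n})\size{S_0}\size{S_1}}{\sqrt{\size{S_0}\size{S_1}}} = \frac{d}{n}\sqrt{\size{S_0}\size{S_1}} \geq \frac{d(n-d)}{2n} \geq \frac{1}{4}
\end{align}
for $1 \leq d < n$. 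Thus in every non-trivial situation $\rho$ is bounded below by a constant, hence $d = O(\rho d)$ and the additive $+d$ in Theorem~\ref{thm:inverse_mixing_simplicial} is absorbed into $O(\rho d(\log(r/\rho)+1))$, yielding the corollary.

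I do not anticipate any substantial obstacle here, since the corollary is essentially a cosmetic cleanup of the theorem and the required lower bounds on $\rho$ follow from routine choices of test sets in the mixing inequality. The only mild subtlety is remembering to handle the complete complex separately, since the natural singleton test used in the generic case degenerates there.
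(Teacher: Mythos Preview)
Your proposal is correct and follows the paper's argument essentially verbatim: the same three-case split (empty, generic, complete), the same singleton test to get $\rho\geq 1/2$ in the generic case, and the same choice of $S_2,\dots,S_d$ singletons with $\size{S_0}=\size{S_1}=\ceil{(n-d)/2}$ to get $\rho\geq 1/4$ for the complete complex. The only slip is that the $d-1$ singleton vertices $S_2,\dots,S_d$ form a $(d-2)$-cell (with $d-1$ vertices), not a $(d-1)$-cell; this does not affect the computation.
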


\section{Friedman-Wigderson Hypergraph Setting}

\subsection{Notation for hypergraph eigenvalues}  


The notion of eigenvalues for hypergraphs that we now describe was developed by Friedman \& Wigderson in \cite{FW}.  Further discussion can be found in \cite{LM}.

Throughout, let $H = (V, E(H))$ be a $k$-uniform hypergraph with vertex set $V = \set{v_1, \dots, v_n}$.  We will only consider hypergraphs $H$ with no loops or multiple edges, that is, $E(H)\subseteq \binom{V}{k}$.  The \emph{degree} $\deg(S)=\deg_H(S)$ of a $(k-1)$-set $S$ of vertices in $H$ is the number of edges containing $S$. Say that $H$ is \emph{$r$-regular} if $\deg(S)=r$ for every $(k-1)$-set $S$.

\begin{definition}[Hypergraph adjacency form]
    Let $A = A_H : \prod_{i=1}^k \R^n \to \R$ be the $k$-linear form defined by
    \begin{align}
        A(e_{i_1}, e_{i_2}, \dots, e_{i_k})
        \defeq\begin{cases}
            1 &\text{if }\set{v_{i_1}, v_{i_2}, \dots, v_{i_k}}\in E(H)\\
            0 &\text{otherwise.}
        \end{cases}
    \end{align}
    for all choices of the standard basis vectors $e_{i_1}, e_{i_2}, \dots, e_{i_k}$.
\end{definition}

\begin{definition}  If $V_1,\dots,V_k$ are subsets of $V$, then let
    \begin{align}
        e_H(V_1,\dots,V_k) \defeq \abs{\Set[\big]{\parens{v_1,\dots,v_k}\in V_1\times\dots\times V_k:\set{v_1,\dots,v_k}\in E(H)}}.
    \end{align}
\end{definition}
    
As with the adjacency form we will suppress the subscript $H$ when the hypergraph is clear from context.  If $V_1,\dots, V_k$ are pairwise disjoint, this is the number of edges that intersect each $V_i$ in exactly one vertex.  Alternatively, if we take $x^i$ to be the indicator vector of $V_i$ then we could equivalently define $e(V_1, \dots, V_k) = A(x^1,\dots,x^k)$.

Let $J$ denote the $k$-linear form with $J(e_{i_1}, e_{i_2}, \dots, e_{i_k}) = 1$ for all choices of standard basis vectors $e_{i_1}, e_{i_2}, \dots, e_{i_k}$.  Let $K = (V, \binom{V}{k})$ denote the complete $k$-uniform hypergraph on vertex set $V$ (with corresponding adjacency form $A_K$ which evaluates to 1 on any \emph{distinct} standard basis vectors.)


\begin{definition}
    If $\displaystyle \phi: \prod_{i=1}^k \R^n \to \R$ is a $k$-linear form, we define the \emph{spectral norm} of $\phi$ to be 
    \begin{align}
        \norm{\phi} \defeq \sup_{x_i\in \R^n,\ x_i\neq 0} \frac{\abs{\phi(x_1, \dots, x_k)}}{\norm{x_1} \dots \norm{x_k}}.
    \end{align}
\end{definition}

In the case where $\phi$ is symmetric, as shown in ~\cite{FW} we in fact have that 
\begin{align}
    \norm{\phi} = \sup_{x\in \R^n,\ x\neq 0} \frac{\abs{\phi(x, \dots, x)}}{\norm{x}^k}.
\end{align}

Observe that both $A$ and $J$ are symmetric.

Recall that the first (largest) eigenvalue of a graph can be defined as the operator norm of its adjacency matrix, $\norm{A_G}$, and if the graph is $r$-regular then the second-largest eigenvalue is $\norm{A_G - \tfrac{r}{n} J}$.  This motivates a definition of the second eigenvalue for \emph{hypergraphs} given by Friedman and Wigderson in \cite{FW}: if $H$ is $r$-regular, they define the second eigenvalue to be
    \begin{align}
        \lambda_2(H) &\defeq \norm{A-\frac{k!\size{E(H)}}{n^k}J}\\
        &= \norm{A-\tfrac{r}{n}J}.      \label{eqn:lambda2}
    \end{align}
    For any $H$ (not necessarily $r$-regular), the quantity in \eqref{eqn:lambda2} is called the \emph{second eigenvalue of $H$ with respect to $r$-regularity}.

For technical reasons, we will take a slightly different definition for the second eigenvalue.


\begin{definition}    
    For any $\alpha\in \R$, the \emph{second eigenvalue of $H$ with respect to $\alpha$-density} is 
    \begin{align}
        \lambda_{2,\alpha}(H) \defeq \norm{A-\alpha A_K} = \sup_{x\in \R^n,\ x\neq 0} \frac{\abs{A(x,\dots,x) - \alpha A_K(x,\dots,x)}}{\norm{x}^k}.
    \end{align}
\end{definition}

We also define a parallel parameter measuring the combinatorial expansion of a hypergraph.
\begin{definition}  For a $k$-uniform hypergraph $H$, define for each $\alpha \geq 0$
    \begin{align}
        \rho_\alpha(H) \defeq \max_{V_1,\dots,V_k}\frac{\Abs[\big]{\size{e(V_1,\dots,V_k)}-\alpha\size{V_1}\dots \size{V_k}}}{\sqrt{\size{V_1}\dots \size{V_k}}},
    \end{align}
    where the maximum is taken over all tuples $V_1,\dots,V_k$ of pairwise disjoint nonempty subsets of $V$.
\end{definition}
    
\begin{remark}
    Our aim is to bound the second eigenvalue in terms of $\rho=\rho_{\alpha}(H)$.
    Unfortunately, we find that independently of $\rho$, $\lambda_2 = \Omega(rn^{k-2})$ with high probability for random hypergraphs with edge density $r/n$, making an inverse mixing lemma for $\lambda_2$ impossible.  
    Our new definition $\lambda_{2,\alpha}$ allows us to avoid this problem.
    A further discussion of this problem is found in Section \ref{sec:counterexamples}.
\end{remark}

It is natural in our definition to choose $\alpha = \size{E(H)}/\size{E(K)}$, i.e., the edge-density of $H$.  However, to more closely parallel the Friedman-Wigderson definition one can choose $\alpha = \frac{r}{n}$.  For now we will proceed without specifying a fixed value for $\alpha$.

\begin{remark}
    Even if one fixes $\alpha$ as suggested above to be the edge density of $H$, our definition does not quite agree with the usual definition of graph eigenvalues in the case of $r$-regular graphs ($k=2$).  In particular, where $\lambda(G) = \Norm{A - \tfrac{r}{n} J}$ we use $\lambda_{2,\alpha}(G) = \Norm{A - \tfrac{r}{n-1} A_K}$.  However, it is easy to see that the two values never differ by more than $\frac{r}{n-1}\Norm{I - \tfrac{1}{n}J} = \tfrac{r}{n-1} \leq 1$.
\end{remark}

The following simple upper bound will come in handy in later analysis.
\begin{proposition}\label{thm:rho_sanity_bound}
    For any $k$-uniform hypergraph $H$ with maximum degree $r$, 
    \begin{align}
        \rho_\alpha(H) \leq (r+\alpha n) n^{(k-2)/2}.
    \end{align}
\end{proposition}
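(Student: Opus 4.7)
The plan is to split the numerator via the triangle inequality,
\[|e(V_1,\dots,V_k)-\alpha\size{V_1}\cdots\size{V_k}|\leq e(V_1,\dots,V_k)+\alpha\size{V_1}\cdots\size{V_k},\]
and bound each nonnegative term separately after dividing by the denominator $\sqrt{\size{V_1}\cdots\size{V_k}}$. The ``$\alpha$'' half is essentially free: dividing $\alpha\size{V_1}\cdots\size{V_k}$ by the square root leaves $\alpha\sqrt{\size{V_1}\cdots\size{V_k}}\leq\alpha n^{k/2}=\alpha n\cdot n^{(k-2)/2}$, where I use only $\size{V_i}\leq n$.

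For the edge-count term I would exploit the pairwise disjointness of the $V_i$ together with the degree hypothesis. Fix any index $i\in\{1,\dots,k\}$ and any tuple $(v_j)_{j\neq i}\in\prod_{j\neq i}V_j$; because the $V_j$ are disjoint, the entries $v_j$ are distinct and form a bona fide $(k-1)$-set of degree at most $r$, so the number of $v_i\in V_i$ completing them to an edge is at most $r$. Summing over $(v_j)_{j\neq i}$ yields $e(V_1,\dots,V_k)\leq r\prod_{j\neq i}\size{V_j}$ for each $i$. Multiplying these $k$ inequalities and taking the $k$-th root gives the geometric-mean bound
\[e(V_1,\dots,V_k)\leq r\left(\prod_{j=1}^k\size{V_j}\right)^{(k-1)/k}.\]
Dividing by $\sqrt{\prod_j\size{V_j}}$ leaves $r(\prod_j\size{V_j})^{(k-2)/(2k)}\leq r\cdot n^{(k-2)/2}$, again using only $\size{V_j}\leq n$.

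Summing the two contributions gives $\rho_\alpha(H)\leq (r+\alpha n)n^{(k-2)/2}$, as claimed. The only point requiring genuine care is the final exponent on $n$: a single coordinate bound such as $e(V_1,\dots,V_k)\leq rn^{k-1}$ would be off by a factor of $\sqrt{n}$, so it is essential to symmetrize over all $k$ choices of ``free coordinate'' before dividing by $\sqrt{\prod_j\size{V_j}}$. The geometric-mean step is precisely what aligns the exponents of $\size{V_j}$ in the numerator with those in the denominator.
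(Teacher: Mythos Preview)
Your proof is correct and follows essentially the same approach as the paper's: split via the triangle inequality and control the edge count using the maximum-codegree bound $e(V_1,\dots,V_k)\le r\prod_{j\ne i}\size{V_j}$. The only cosmetic difference is that the paper, after ordering so that $\size{V_1}\ge\cdots\ge\size{V_k}$, uses the single choice $i=1$ and then cancels via $\size{V_2}/\size{V_1}\le 1$, whereas you symmetrize over all $i$ with a geometric mean; both routes land on the same exponent $n^{(k-2)/2}$.
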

\begin{proof}
    Working directly from the definition, we have
    \begin{align}
        \rho_\alpha(H) 
        &= \max_{V_1,\dots,V_k} \frac{\Abs[\big]{e(V_1,\dots, V_k) - \alpha\size{V_1}\dots\size{V_k}}}{\sqrt{\size{V_1}\dots\size{V_k}}}\\
        &\leq \max_{V_1,\dots,V_k} \frac{e(V_1,\dots, V_k) + \alpha\size{V_1}\dots\size{V_k}}{\sqrt{\size{V_1}\dots\size{V_k}}}\\
        &\leq \max_{\size{V_1}\geq \dots \geq \size{V_k}} \frac{r\size{V_2}\dots\size{V_k} - \alpha\size{V_1}\dots\size{V_k}}{\sqrt{\size{V_1}\dots\size{V_k}}}\\
        &= \max_{\size{V_1}\geq \dots \geq \size{V_k}} (r + \alpha \size{V_1})\frac{\sqrt{\size{V_2}\dots\size{V_k}}}{\sqrt{\size{V_1}}}\\
        &\leq (r+\alpha n) n^{(k-2)/2}.
    \end{align}
\end{proof}


\subsection{Hypergraph Mixing Lemmas}
The following hypergraph mixing result is given in \cite{FW}. 

\begin{theorem}[Mixing Lemma for hypergraphs, \citet{FW}]
    \label{thm:mixing_hypergraph}
    Let $H$ be a $k$-uniform hypergraph.  For any choice of subsets $V_1,\dots,V_k\subset V(H)$ of vertices, 
    \begin{align}
        \abs{\size{e(V_1,\dots,V_k)}-\frac{k!\size{E(H)}}{n^k}\size{V_1}\dots \size{V_k}}
        \leq \lambda_2(H)\sqrt{\size{V_1}\dots \size{V_k}}.
    \end{align}
\end{theorem}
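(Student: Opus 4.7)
The plan is to apply the definition of the spectral norm of the $k$-linear form $A - \tfrac{k!\size{E(H)}}{n^k} J$ directly to the indicator vectors of $V_1, \dots, V_k$. Specifically, I would let $x^i \in \R^n$ be the indicator vector of $V_i$, and then compute both $A(x^1,\dots,x^k)$ and $J(x^1,\dots,x^k)$ by expanding in the standard basis. By $k$-linearity,
\begin{align}
A(x^1, \dots, x^k) = \sum_{i_1, \dots, i_k} x^1_{i_1} \cdots x^k_{i_k}\, A(e_{i_1}, \dots, e_{i_k}),
\end{align}
which counts exactly the number of ordered tuples $(v_{i_1}, \dots, v_{i_k}) \in V_1 \times \cdots \times V_k$ with $\set{v_{i_1}, \dots, v_{i_k}} \in E(H)$. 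This matches the definition of $e(V_1, \dots, V_k)$; the identity holds even when the $V_i$ overlap because $H$ has no loops, so any tuple forming an edge must automatically have distinct vertices. Similarly, $J(x^1, \dots, x^k) = \size{V_1} \cdots \size{V_k}$.

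Next, since $\norm{x^i} = \sqrt{\size{V_i}}$, the definition of the spectral norm gives
\begin{align}
\abs{A(x^1, \dots, x^k) - \tfrac{k!\size{E(H)}}{n^k} J(x^1, \dots, x^k)} &\leq \Norm{A - \tfrac{k!\size{E(H)}}{n^k} J} \prod_{i=1}^k \norm{x^i} \\
&= \lambda_2(H) \sqrt{\size{V_1} \cdots \size{V_k}},
\end{align}
which is exactly the bound claimed.

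There is really no serious obstacle here; the mixing lemma is an almost immediate consequence of the way $\lambda_2(H)$ has been defined, and the only point requiring a moment of care is the overlap issue above, which is handled cleanly by the no-loops assumption. All the genuine content lies upstream, in choosing the right definition of $\lambda_2$ (and relating it, for instance, to the symmetric version $\sup_{x\neq 0}\abs{\phi(x,\dots,x)}/\norm{x}^k$ cited from Friedman--Wigderson). The hard direction is of course the converse, which is the main business of this paper.
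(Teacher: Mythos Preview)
Your argument is correct and is precisely the approach the paper takes. The paper does not actually supply its own proof of this theorem (it simply cites \cite{FW}), but for the analogous statement with $\lambda_{2,\alpha}$ (Theorem~\ref{thm:mixing_hypergraph_density}) it gives exactly the proof you wrote: take indicator vectors $x^i$ of the $V_i$, identify $A(x^1,\dots,x^k)=e(V_1,\dots,V_k)$ and $\norm{x^i}=\sqrt{\size{V_i}}$, and apply the definition of the spectral norm.
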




Before stating and proving a converse to Theorem~\ref{thm:mixing_hypergraph} above, we  mention the mixing result using our definition of the second eigenvalue $\lambda_{2,\alpha}$, with respect to density $\alpha$.

\begin{theorem}[Mixing Lemma for hypergraphs]
\label{thm:mixing_hypergraph_density}
 Let $H$ be a $k$-uniform hypergraph.  For any choice of subsets $V_1,\dots,V_k\subset V(H)$ of vertices, 
    \begin{align}
        \abs{e(V_1,\dots,V_k)-\alpha e_K(V_1,\dots,V_k)}
        \leq \lambda_{2,\alpha}(H)\sqrt{\size{V_1}\dots \size{V_k}}.
    \end{align}
\end{theorem}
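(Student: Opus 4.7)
The plan is to reduce the claim to a direct application of the definition of $\lambda_{2,\alpha}(H) = \norm{A - \alpha A_K}$ by plugging in indicator vectors of the sets $V_1, \dots, V_k$. The multilinear structure of $A$ and $A_K$ means that these multilinear forms, evaluated on indicators, recover exactly the counting quantities $e(V_1,\dots,V_k)$ and $e_K(V_1,\dots,V_k)$ appearing in the statement, so the spectral norm bound then gives the desired inequality essentially for free.

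More concretely, I would proceed as follows. First, for each $i$, let $x^{(i)} \in \R^n$ be the 0/1 indicator vector of $V_i$, so that $\norm{x^{(i)}} = \sqrt{\size{V_i}}$. By multilinearity,
\begin{align}
    A(x^{(1)}, \dots, x^{(k)})
    = \sum_{v_1 \in V_1, \dots, v_k \in V_k} A(e_{v_1}, \dots, e_{v_k})
    = e(V_1, \dots, V_k),
\end{align}
using the definition of $A$ on basis vectors. Next, I would argue the analogous identity $A_K(x^{(1)}, \dots, x^{(k)}) = e_K(V_1, \dots, V_k)$: here $A_K(e_{v_1}, \dots, e_{v_k})$ is $1$ exactly when $\{v_1, \dots, v_k\} \in \binom{V}{k}$, i.e.\ when the $v_i$ are distinct, matching precisely the tuples counted by $e_K(V_1, \dots, V_k)$. (It is worth noting that $V_1, \dots, V_k$ are not assumed to be disjoint in the theorem statement, but the indicator-vector approach is insensitive to this; the potential coincidences $v_i = v_j$ are handled uniformly by the vanishing of $A_K$ on repeated basis vectors, and $A$ likewise vanishes on such tuples since $H$ has no loops.)

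Combining the two identities gives
\begin{align}
    e(V_1,\dots,V_k) - \alpha\, e_K(V_1,\dots,V_k)
    = (A - \alpha A_K)(x^{(1)}, \dots, x^{(k)}).
\end{align}
Taking absolute values and applying the definition of the spectral norm,
\begin{align}
    \abs{(A - \alpha A_K)(x^{(1)}, \dots, x^{(k)})}
    \leq \norm{A - \alpha A_K} \cdot \norm{x^{(1)}} \cdots \norm{x^{(k)}}
    = \lambda_{2,\alpha}(H) \sqrt{\size{V_1}\dots\size{V_k}},
\end{align}
which is exactly the desired bound.

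There is not really a main obstacle here; the statement is essentially a definitional unfolding once one observes that the combinatorial counts $e$ and $e_K$ are precisely the values of the multilinear forms $A$ and $A_K$ on indicator vectors. The only small subtlety worth being careful about is the identification $A_K(x^{(1)},\dots,x^{(k)}) = e_K(V_1,\dots,V_k)$ when the $V_i$ overlap, which is handled automatically by the vanishing of $A_K$ on tuples of basis vectors with repeats. Everything else is a one-line application of the spectral norm inequality.
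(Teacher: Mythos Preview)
Your proposal is correct and follows essentially the same approach as the paper: plug in the indicator vectors of the $V_i$, identify $A$ and $A_K$ evaluated on them with $e(V_1,\dots,V_k)$ and $e_K(V_1,\dots,V_k)$, and apply the definition of $\lambda_{2,\alpha}(H)=\norm{A-\alpha A_K}$. The only cosmetic difference is that the paper explicitly disposes of the case where some $V_i$ is empty before dividing through by $\prod_i\norm{x^i}$, whereas you apply the spectral norm inequality in multiplicative form, which is harmless since both sides vanish when some $x^{(i)}=0$.
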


\begin{proof}
Let $V_1,\dots,V_k\subset V(H)$.  If any $V_i$ is empty, it is clear that the inequality holds; we may assume that each $V_i$ is nonempty.  For $1\leq i\leq k$ let $x^i\in \set{0,1}^n$ be the indicator vector of $V_i$. Then

\begin{align}
    \frac{\abs{e(V_1,\dots,V_k)-\alpha e_K(V_1,\dots,V_k)}}{\sqrt{\size{V_1}\dots \size{V_k}}}
    &=\frac{\abs{A(x^1,\dots ,x^k)-\alpha A_K(x^1,\dots, x^k)}}{\prod_{i=1}^k\norm{x^i}} \\
    &\leq \norm{A-\alpha A_K} = \lambda_{2,\alpha}(H)
\end{align}
as desired.

\end{proof}


We now prove the main theorem of this section -- a converse to the above Theorem \ref{thm:mixing_hypergraph_density}:

\begin{theorem}[Inverse Mixing Lemma for hypergraphs]
    \label{thm:inverse_mixing_hypergraph}
    If $H$ is a $k$-uniform hypergraph with maximum codegree $r$ and $\rho = \rho_\alpha(H)$ 
    then 
    \begin{align}
        \lambda_{2,\alpha}(H) = O\parens{\rho\,(\log^{k-1}((r+\alpha n) n^{k-2}/\rho)+1)}.
    \end{align}
\end{theorem}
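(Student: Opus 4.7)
The plan is to adapt the dyadic decomposition argument of Bilu and Linial \cite{BL} from bilinear forms to general $k$-linear forms. Write $\phi \defeq A-\alpha A_K$, so $\lambda_{2,\alpha}(H)=\norm{\phi}$, and set $M \defeq (r+\alpha n)n^{(k-2)/2}$, which by essentially the same argument as in Proposition~\ref{thm:rho_sanity_bound} is also an upper bound for $\norm{\phi}$. We may assume $\rho \ll M$, otherwise the target bound is trivial.

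The first step is to pick unit vectors $x^1,\dots,x^k$ with $\abs{\phi(x^1,\dots,x^k)}$ arbitrarily close to $\norm{\phi}$, and to reduce to nonnegative vectors by splitting each $x^i$ into its positive and negative parts (absorbing a factor of $2^k$ into the implicit constant). Then, for a truncation threshold $\tau \asymp \rho/(M\sqrt n)$, dyadically decompose each $x^i$ by level sets $S^{i,j} \defeq \set{v : x^i_v \in [2^{-j-1},2^{-j})}$ for $0\le j\le L \defeq \ceil{\log_2(1/\tau)} = O(\log((r+\alpha n)n^{k-2}/\rho))$, together with a tail $y^i$ supported on coordinates with $x^i_v<\tau$. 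This yields $x^i \approx \sum_j 2^{-j}\mathbf{1}_{S^{i,j}} + y^i$, with equality up to a multiplicative factor of $2$ per level.

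Expanding $\phi(x^1,\dots,x^k)$ multilinearly splits the form into tail terms (those involving at least one $y^i$) and main terms (those involving only indicator vectors). A tail term is bounded by $\norm{\phi}\cdot\tau\sqrt n \cdot 1^{k-1} = O(M\tau\sqrt n) = O(\rho)$ using the sanity bound and $\norm{y^i}_2 \le \tau\sqrt n$. Each main term has the form $2^{-(j_1+\cdots+j_k)}\phi(\mathbf{1}_{S^{1,j_1}},\dots,\mathbf{1}_{S^{k,j_k}})$; to invoke the expansion hypothesis, which requires pairwise disjoint vertex sets, partition each such tuple into the (at most $2^k$) atoms of its joint Venn diagram and apply the hypothesis to each disjoint subtuple, paying only a constant factor depending on $k$. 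The hypothesis then provides the bound $O(\rho\sqrt{\abs{S^{1,j_1}}\cdots\abs{S^{k,j_k}}})$ per main term.

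The final and hardest step is to sum the main bounds, i.e., to show
\begin{align*}
\sum_{0\le j_1,\dots,j_k\le L} 2^{-(j_1+\cdots+j_k)}\sqrt{\abs{S^{1,j_1}}\cdots\abs{S^{k,j_k}}} = O(L^{k-1}).
\end{align*}
Here the bound $\sum_j 2^{-2j}\abs{S^{i,j}} \le 4\norm{x^i}_2^2 = 4$, which follows directly from the dyadic decomposition, plays the role of the $\ell^2$ constraint. Naive variable-by-variable Cauchy-Schwarz yields only $O(L^{k/2})$, which is sharper than the target for $k\geq 3$; matching exactly the stated $\log^{k-1}$ comes from a coarser accounting, effectively fixing $k-1$ of the indices and bounding their contribution uniformly, then absorbing the remaining index using the $\ell^2$ bound. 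The principal obstacle is this combinatorial bookkeeping of $k$ interleaved dyadic decompositions together with the disjointness cleanup; once it is settled, combining the $O(\rho)$ tail contribution with the $O(\rho L^{k-1})$ main contribution yields the stated bound.
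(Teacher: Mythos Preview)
Your high-level plan (dyadic decomposition, then apply the combinatorial hypothesis on indicator vectors) parallels the paper's, but two of the steps as written do not go through.

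First, the level-set rounding $x^i \approx \sum_j 2^{-j}\mathbf{1}_{S^{i,j}}$ is only a coordinate-wise factor-$2$ approximation, and this does \emph{not} transfer to the form $\phi=A-\alpha A_K$: since $\phi$ has entries of both signs, replacing each $x^i$ by a vector that agrees with it up to a factor of $2$ in every coordinate can change $\phi(x^1,\dots,x^k)$ arbitrarily through cancellation, so ``expanding $\phi$ multilinearly'' after this replacement does not bound the original quantity. The paper repairs this with randomized rounding (Lemma~\ref{lem:continuum}): pick each coordinate $z_v\in\{\sgn(x_v)2^{\ell_v},\sgn(x_v)2^{\ell_v+1}\}$ randomly with $\E[z_v]=x_v$, so that $\E[\phi(z,\dots,z)]=\phi(x,\dots,x)$ exactly by multilinearity and independence. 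An equally valid fix in your framework is to use the exact binary expansion $x^i=\sum_j 2^{-j}z^{i,j}$ with $z^{i,j}\in\{0,1\}^n$; the constraint $\sum_j 2^{-2j}\lvert\supp z^{i,j}\rvert=O(\lVert x^i\rVert^2)$ still holds and the rest of your counting goes through unchanged.

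Second, the Venn-diagram step does not produce disjoint inputs: after expanding each $\mathbf 1_{S^{i,j_i}}$ over the atoms $A_T$, the terms $\phi(\mathbf 1_{A_{T_1}},\dots,\mathbf 1_{A_{T_k}})$ may have $T_i=T_{i'}$ (for instance $T_1=T_2=\{1,2\}$ satisfies $1\in T_1$ and $2\in T_2$), so the same nonempty atom occupies two slots and the disjointness hypothesis does not apply to those terms; they are not zero and cannot simply be discarded. The paper's Lemma~\ref{lem:expansion_equivalence} handles the passage from disjoint to arbitrary $\{0,1\}$-vectors differently, by averaging over all ordered partitions $P_1,\dots,P_k$ of $[n]$ and exploiting that $\phi$ vanishes on tuples with a repeated vertex, which makes every term in the average genuinely disjoint. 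With these two fixes your truncation-plus-tail scheme is a legitimate alternative to the paper's near/far split (the sets $P,Q$ in Lemma~\ref{lem:powers_of_two}); and as you observe, the factored Cauchy--Schwarz then actually delivers $O(\rho L^{k/2})$, sharper than the stated $\log^{k-1}$ exponent.
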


\begin{remark} We have left this result in what is perhaps not its simplest form, in order to show the difference between the cases $k=2$ and $k\geq 3$.  In the case where $k=2$ and $\alpha=\Theta(r / n)$ the dependence on $n$ disappears and this simplifies to the classic result $\lambda_{2,\alpha} = O(\rho(\log(r/\rho) + 1))$ for graphs.  For larger (but still constant) uniformity, we can still simplify the result to $\lambda_{2,\alpha} = O(\rho\,(\log^{k-1}((r+\alpha n) n/\rho) + 1))$.
\end{remark}



\newcommand{\bound}{b}
We prove the theorem through a series of lemmas.  First we show that the partite expansion condition suffices to give expansion for any (not necessarily disjoint) sets of vertices.  Throughout, $\bound$ represents a constant independent of $x$ (but which may depend on $k$, $n$, $r$, $\alpha$, $\rho$ or anything else).
\begin{lemma}
    \label{lem:expansion_equivalence}
    Let $H$ be a $k$-uniform hypergraph on $n$ vertices with adjacency form $A$, and suppose that 
    \begin{align}
        \abs{A(x^1,\dots,x^k) - \alpha  A_K(x^1,\dots,x^k)} 
        \leq \rho \prod_{i=1}^k \norm{x^i}
    \end{align}
    for every choice of pairwise orthogonal vectors $x^1,\dots,x^k \in \set{0,1}^n$.  Then
    \begin{align}
        \abs{A(x^1,\dots,x^k) - \alpha  A_K(x^1,\dots,x^k)} 
        \leq \rho k^{k/2} \prod_{i=1}^k \norm{x^i}
    \end{align}
    for every choice of (not necessarily orthogonal) vectors $x^1,\dots,x^k \in \set{0,1}^n$.
\end{lemma}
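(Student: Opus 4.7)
My plan is to reduce the general case to the orthogonal one by a random labeling trick. Let $V_i = \supp x^i$, fix a uniformly random function $\ell \colon V \to [k]$ with all labels independent, and set $U_i \defeq V_i \cap \ell^{-1}(i)$. Because the preimages $\ell^{-1}(1),\dots,\ell^{-1}(k)$ partition $V$, the sets $U_1,\dots,U_k$ are automatically pairwise disjoint; since for $\set{0,1}$-vectors pairwise orthogonality is the same as having disjoint supports, the hypothesis applies to $\mathbf{1}_{U_1},\dots,\mathbf{1}_{U_k}$ for every outcome of $\ell$.

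The first key step is to observe that this random restriction is an unbiased estimator, up to a factor of $k^{-k}$, of the forms we care about. Both $A$ and $A_K$ vanish on any tuple of basis vectors with a repeated coordinate, so only ordered tuples $(v_1,\dots,v_k)$ with pairwise distinct entries contribute to $A(x^1,\dots,x^k)$ and $A_K(x^1,\dots,x^k)$. For such a tuple, the event $\{v_i \in U_i \text{ for all } i\}$ is exactly $\{\ell(v_i)=i \text{ for all } i\}$, which by independence of the labels has probability $k^{-k}$. Summing over tuples and writing $g \defeq A - \alpha A_K$ gives $\E g(\mathbf{1}_{U_1},\dots,\mathbf{1}_{U_k}) = k^{-k}\, g(x^1,\dots,x^k)$, where the expectation is over $\ell$.

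The second step applies the hypothesis pointwise and then squeezes out the right constant using Jensen's inequality. For each realization of $\ell$, the hypothesis gives $\abs{g(\mathbf{1}_{U_1},\dots,\mathbf{1}_{U_k})} \le \rho \prod_i \sqrt{\abs{U_i}}$. Taking expectations and using the triangle inequality yields $k^{-k}\abs{g(x^1,\dots,x^k)} \le \rho\, \E \prod_i \sqrt{\abs{U_i}}$. Since $\sqrt{\cdot}$ is concave, $\E \sqrt{\prod_i \abs{U_i}} \le \sqrt{\E \prod_i \abs{U_i}}$; and $\E \prod_i \abs{U_i}$ counts ordered distinct tuples $(v_1,\dots,v_k) \in V_1\times\cdots\times V_k$ weighted by $k^{-k}$, hence is at most $k^{-k}\prod_i\abs{V_i} = k^{-k}\prod_i \norm{x^i}^2$. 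Combining these inequalities produces $\abs{g(x^1,\dots,x^k)} \le k^{k/2}\rho \prod_i \norm{x^i}$, as required.

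The only real obstacle is bookkeeping the exponent of $k$. A cruder approach that simply upper-bounded $\abs{U_i}$ by $\abs{V_i}$ under the expectation would lose the Jensen savings and produce a constant $k^k$ instead of the claimed $k^{k/2}$. To secure $k^{k/2}$ one must keep the square root inside the expectation and exploit the fact that each $v_i$ has to land in a \emph{single} prescribed label class, which introduces the additional factor $k^{-k/2}$ after the square root.
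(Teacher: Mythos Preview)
Your proof is correct and follows essentially the same approach as the paper: the paper sums deterministically over all ordered partitions $(P_1,\dots,P_k)$ of $V$ and applies the hypothesis to $(V_i\cap P_i)$, while you take the equivalent probabilistic viewpoint of a uniformly random labeling $\ell:V\to[k]$; both then use concavity of the square root (Jensen) on the resulting average to extract the factor $k^{k/2}$. Your phrasing is arguably a bit cleaner since it avoids the Stirling-number bookkeeping $k!\,S(n,k)\le k^n$, but the decomposition, the key inequality, and the constant are identical.
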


\begin{proof}
    Let $V_1,\dots,V_k \subseteq [n]$ be any sets of vertices. Consider an ordered partition $\mathcal{P} = P_1\cup \dots \cup P_k$ of $[n]$ into $k$ nonempty parts.  Then
    \begin{align}
        e(V_1,\dots,V_k) 
        &= \frac{1}{k^{n-k}}\sum_{\mathcal P} e(P_1\cap V_1,\dots,P_k\cap V_k),
    \end{align}
    as every ordered edge $(v_1,\dots,v_k)$ shows up in the sum once for each partition $\mathcal{P}$ with $v_j\in P_j$ for every $j$, and there are $(n-k)^k$ such partitions (the remaining $n-k$ elements can be partitioned in any way among the $k$ sets).  Similarly, replacing $H$ with the complete hypergraph gives
    \begin{align}
        e_K(V_1,\dots,V_k)
        &= \frac{1}{k^{n-k}}\sum_{\mathcal P}  e_K(P_1\cap V_1,\dots,P_k\cap V_k)
        = \frac{1}{k^{n-k}}\sum_{\mathcal P} \prod_i \size{P_i\cap V_i}.
    \end{align}
    For a fixed partition the subsets $P_i\cap V_i$ are disjoint, so by hypothesis we have
    \begin{align}
        \abs{e(P_1\cap V_1,\dots,P_k\cap V_k) - \alpha  e_K(P_1\cap V_1,\dots,P_k\cap V_k)}
        &\leq \rho \sqrt{\prod_i \size{P_i\cap V_i}}.
    \end{align}
    Then
    \begin{align}
        \MoveEqLeft\abs{e(V_1,\dots,V_k) - \alpha e_K(V_1,\dots,V_k)}\\
        &\leq \frac{1}{k^{n-k}}\sum_{\mathcal P} \abs{e(P_1\cap V_1,\dots,P_k\cap V_k) - \alpha e_K(P_1\cap V_1,\dots,P_k\cap V_k)}\\
        &\leq \frac{1}{k^{n-k}}\sum_{\mathcal P} \rho \sqrt{\prod_i \size{P_i\cap V_i}}\\
        &= \frac{\rho k!S(n,k)}{k^{n-k}} \sum_{\mathcal{P}} \frac{1}{k! S(n,k)} \sqrt{\prod_i \size{P_i\cap V_i}}\\
        &\leq \frac{\rho k! S(n,k)}{k^{n-k}} \sqrt{\frac{1}{k! S(n,k)} \sum_{\mathcal{P}} \prod_i \size{P_i\cap V_i}}\label{eqn:concavity}\\
        &=\frac{\rho k! S(n,k)}{k^{n-k}} \sqrt{\frac{k^{n-k} e_K(V_1,\dots,V_k)}{k! S(n,k)}}\\
        &\leq \rho \sqrt{\frac{k! S(n,k)}{k^{n-k}}} \prod_i \sqrt{\size{V_i}} \leq \rho k^{k/2} \prod_i \sqrt{\size{V_i}}.
    \end{align}

    Here $k!S(n,k) \leq k^n$ is the number of ordered partitions of $[n]$ into $k$ nonempty sets (the number of terms in the sum over all choices of $\mathcal P$), and the inequality in \eqref{eqn:concavity} follows by concavity of square root.  
    
    The final result follows immediately, noting that if $x^1,\dots,x^k$ are the indicator vectors for $V_1,\dots,V_k$ then $e(V_1,\dots,V_k) = A(x^1,\dots,x^k)$ and $\size{V_i} = \norm{x^i}^2$.
    
\end{proof}

The main part of the work goes towards proving a hypergraph version of Lemma \ref{thm:bilu_linial}.  We go through several steps to show that if the expansion bound holds for $\set{0,1}$ vectors then a somewhat relaxed bound holds for all real vectors.

\begin{lemma}
    \label{lem:zero_pm_one}
    Suppose $B$ is a $k$-linear form such that 
    \begin{align} 
        \abs{B(x^1,\dots,x^k)} \leq \bound \prod_{i=1}^k \norm{x^i}
    \end{align}
    for every $x^1,\dots,x^k\in\set{0,1}^n$.  Then 
    \begin{align} 
        \abs{B(x^1,\dots,x^k)} \leq 2^{k/2} \bound \prod_{i=1}^k \norm{x^i}
    \end{align}
    for every $x^1,\dots,x^k \in \set{0,\pm 1}^n$.
\end{lemma}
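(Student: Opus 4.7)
The plan is to write each $\{0,\pm 1\}$ vector as a signed difference of two $\{0,1\}$ vectors and exploit multilinearity of $B$. For each $i$, decompose $x^i = x^i_+ - x^i_-$, where $x^i_+,x^i_-\in\{0,1\}^n$ are the indicator vectors of the positive and negative coordinates of $x^i$ respectively. Crucially, $x^i_+$ and $x^i_-$ have disjoint supports, so $\norm{x^i}^2 = \norm{x^i_+}^2 + \norm{x^i_-}^2$.

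Next, expand $B(x^1,\dots,x^k)$ by multilinearity into a sum over $\epsilon = (\epsilon_1,\dots,\epsilon_k)\in\set{+,-}^k$:
\begin{align}
B(x^1,\dots,x^k) = \sum_{\epsilon\in\set{+,-}^k} (-1)^{\abs{\set{i:\epsilon_i=-}}} B(x^1_{\epsilon_1},\dots,x^k_{\epsilon_k}).
\end{align}
Each of the $2^k$ terms has all arguments in $\set{0,1}^n$, so the hypothesis applies and yields
\begin{align}
\abs{B(x^1,\dots,x^k)} \leq \bound \sum_{\epsilon\in\set{+,-}^k} \prod_{i=1}^k \norm{x^i_{\epsilon_i}} = \bound \prod_{i=1}^k \parens{\norm{x^i_+} + \norm{x^i_-}}.
\end{align}

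Finally, apply Cauchy--Schwarz (or the inequality $a+b\leq\sqrt{2(a^2+b^2)}$) at each factor: since the supports of $x^i_+$ and $x^i_-$ are disjoint, $\norm{x^i_+} + \norm{x^i_-} \leq \sqrt{2}\sqrt{\norm{x^i_+}^2+\norm{x^i_-}^2} = \sqrt{2}\,\norm{x^i}$. Multiplying over $i$ gives the claimed bound $2^{k/2}\bound\prod_i\norm{x^i}$.

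The proof is essentially an accounting exercise; there is no real obstacle, but the one place where one has to be a bit careful is to ensure that the decomposition $x^i_\pm$ satisfies $\norm{x^i_+}^2+\norm{x^i_-}^2 = \norm{x^i}^2$ so that Cauchy--Schwarz delivers the factor $\sqrt{2}$ per coordinate. The multiplicative constant $2^{k/2}$ then arises naturally from one factor of $\sqrt{2}$ for each of the $k$ arguments.
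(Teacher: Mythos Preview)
Your proof is correct and follows essentially the same approach as the paper: decompose each $x^i$ into its positive and negative parts, expand by multilinearity into $2^k$ terms of $\{0,1\}$ vectors, apply the hypothesis, factor the resulting sum, and use $a+b\leq\sqrt{2(a^2+b^2)}$ on each factor. The paper's argument is identical in structure and in the final constant.
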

\begin{proof}
    Let $x^1,\dots,x^k \in \set{0,\pm 1}^n$, and decompose $x^i = x^i_+ - x^i_-$ so that $x^i_\pm\in \set{0,1}^n$ and $\supp x^i = \supp x^i_+\cup \supp x^i_-$.  
    Then
    \begin{align} 
        \abs{B(x^1,\dots,x^k)} 
        &= \abs{B(x^1_+ - x^1_-, \dots, x^k_+ - x^k_-)}\\
        &\leq \sum_{\eta \in \set{\pm}^k} \abs{B(x^1_{\eta_1},\dots,x^k_{\eta_k})}
         \leq \sum_{\eta \in \set{\pm}^k} \bound \prod_{i=1}^k \norm{x^i_{\eta_i}}\\
        &= \bound \prod_{i=1}^k \parens{\norm{x^i_+} + \norm{x^i_-}}
         \leq \bound \prod_{i=1}^k \sqrt{2} \norm{x^i}.
    \end{align}
\end{proof}

\begin{lemma}
    \label{lem:powers_of_two}
    Suppose $B$ is a symmetric $k$-linear form satisfying
    \begin{align}
    \label{row-sum}
        \sum_{j=1}^n \abs{B(e_{i_1},\dots,e_{i_{k-1}}, e_j)} \leq m
    \end{align}
    for every $(i_1,\dots,i_{k-1})\in [n]^{k-1}$ and 
    \begin{align} 
        \abs{B(x^1,\dots,x^k)} \leq \bound \prod_{i=1}^k \norm{x^i}
    \end{align}
    for every $x^1,\dots,x^k \in \set{0,\pm 1}^n$.  Let $a \geq b/(m n^{(k-2)/2})$. Then
    \begin{align} 
        \abs{B(x,\dots,x)} &\leq \bound \parens{\lg^{k-1} \parens{\frac{a^2 m^2 n^{k-2}}{\bound^2}} + \frac{k^2}{a}}\norm{x}^k
    \end{align}
    for every $x \in \set{0,\pm 2^{-\ell}: \ell\in\N}^n$.
\end{lemma}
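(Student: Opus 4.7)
My plan is to use a dyadic decomposition in the spirit of Bilu and Linial, generalized to symmetric $k$-linear forms. I would write $x = \sum_{\ell \geq 0} 2^{-\ell} x_\ell$, where each $x_\ell \in \set{0,\pm 1}^n$ is supported on those coordinates where $|x_j| = 2^{-\ell}$, carrying the matching sign. The $x_\ell$ have disjoint supports, so $\norm{x}^2 = \sum_\ell 2^{-2\ell}\norm{x_\ell}^2$, and $k$-multilinearity gives
$$B(x,\ldots,x) = \sum_{\ell_1,\ldots,\ell_k \geq 0} 2^{-(\ell_1+\cdots+\ell_k)} B(x_{\ell_1},\ldots,x_{\ell_k}).$$
I would then fix a threshold $L$ with $2^L \approx amn^{(k-2)/2}/b$, so $L \approx \tfrac{1}{2}\lg(a^2m^2n^{k-2}/b^2)$; the hypothesis $a \geq b/(mn^{(k-2)/2})$ is exactly what makes $L \geq 0$. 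Split the full sum into a ``main'' part, where every $\ell_i \leq L$, and a ``tail'' of the remaining multi-indices.

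For the main part, I would apply the $\set{0,\pm 1}$ spectral hypothesis $|B(x_{\ell_1},\ldots,x_{\ell_k})| \leq b\prod_i \norm{x_{\ell_i}}$ so that the sum factors across indices. On two of the $k$ indices I would use Cauchy--Schwarz to obtain $\sum_{\ell=0}^L 2^{-\ell}\norm{x_\ell} \leq \sqrt{L+1}\,\norm{x}$, and on the remaining $k-2$ indices I would use the trivial bound $2^{-\ell_i}\norm{x_{\ell_i}} \leq \norm{x}$ (which follows from $\norm{x}^2 \geq 2^{-2\ell_i}\norm{x_{\ell_i}}^2$) summed to at most $(L+1)\norm{x}$. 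Multiplying these factors gives the main estimate $b(L+1)^{k-1}\norm{x}^k$, which is exactly the origin of the $\lg^{k-1}$ term in the statement. The choice of precisely two Cauchy--Schwarz steps matters: using it on all $k$ indices would yield only $\lg^{k/2}$, while using it on fewer than two would leave a non-convergent geometric factor.

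For the tail, the spectral bound is too wasteful, so I would instead use the row-sum hypothesis \eqref{row-sum}. I would decompose the tail as a disjoint union according to the smallest index $i^*$ with $\ell_{i^*}>L$; within each such piece, applying the row-sum bound along the $i^*$-th coordinate gives $|B(x_{\ell_1},\ldots,x_{\ell_k})| \leq m\prod_{i \neq i^*}\norm{x_{\ell_i}}^2$, since $\norm{x_{\ell_{i^*}}}_\infty \leq 1$ and $\norm{x_{\ell_i}}_1 = \norm{x_{\ell_i}}^2$. The geometric tail $\sum_{\ell_{i^*}>L} 2^{-\ell_{i^*}} \leq 2^{-L}$ combines with inner sums controlled by $\sum_{\ell \leq L} 2^{-\ell}\norm{x_\ell}^2 \leq 2^L\norm{x}^2$, yielding a per-piece bound of order $m\cdot 2^{-L}\cdot (2^L\norm{x}^2)^{k-1}$. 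With $2^L \approx amn^{(k-2)/2}/b$, this collapses to $O(b/a)\norm{x}^k$, and summing over the $k$ possible values of $i^*$ (together with the symmetry in the row-sum direction) yields the claimed $O(bk^2/a)\norm{x}^k$.

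Combining the two estimates and observing that $(L+1)^{k-1} = O(\lg^{k-1}(a^2m^2n^{k-2}/b^2))$ up to absolute constants gives the claim. The main obstacle I anticipate is the tail bookkeeping: ensuring that the interplay of $L$, $a$, $m$, $n$, and $b$ produces exactly $O(bk^2/a)$ rather than a bound polynomial in $n$. Specifically, the hypothesis $a \geq b/(mn^{(k-2)/2})$ is precisely the condition that lets the geometric factor $2^{-L}$ cancel the accumulated $(2^L)^{k-1}$ from the inner row-sum factors against $m$, and treating the constants tightly enough to get the stated form is the most delicate step.
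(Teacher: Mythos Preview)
Your main-part estimate is fine, but the tail argument has a genuine gap: the claimed collapse to $O(bk^2/a)\|x\|^k$ is false. Applying the row-sum hypothesis along slot $i^*$ gives $|B(x_{\ell_1},\dots,x_{\ell_k})|\le m\prod_{i\ne i^*}s_{\ell_i}$ with $s_\ell=\|x_\ell\|^2$; after multiplying by the weights $\prod_j 2^{-\ell_j}$ and summing, each index $i\ne i^*$ contributes $\sum_{\ell}2^{-\ell}s_{\ell}=\|x\|_1$ (your bound $\sum_{\ell\le L}2^{-\ell}s_\ell\le 2^L\|x\|^2$ applies only to the indices $i<i^*$, not to $i>i^*$), while slot $i^*$ contributes $\sum_{\ell>L}2^{-\ell}\le 2^{1-L}$. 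Even granting your formula $m\cdot 2^{-L}\cdot(2^L\|x\|^2)^{k-1}=m\,2^{L(k-2)}\|x\|^{2k-2}$, this \emph{grows} with $L$ once $k\ge3$; substituting $2^L=amn^{(k-2)/2}/b$ gives $a^{k-2}m^{k-1}n^{(k-2)^2/2}\|x\|^{2k-2}/b^{k-2}$, which is nowhere near $(b/a)\|x\|^k$. The more honest bound $km\,2^{-L}\|x\|_1^{k-1}$ is no better: with $\|x\|_1\le\sqrt n\,\|x\|$ it reduces to $(kb/a)\sqrt n\,\|x\|^{k-1}$, which is not controlled by $\|x\|^k$ when $\|x\|$ is small.

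The missing idea is that one needs a \emph{pair} of levels---one small and one large with a guaranteed gap---not just one large level. The paper splits on the spread, taking $P=\{i\in\N^k:\max_j i_j-\min_j i_j<\gamma\}$ and $Q=\N^k\setminus P$. On $Q$ one reorders so that $i_1=\min$ and $i_k=\max$ (at a cost of $k^2$), uses $2^{-i_k}\le 2^{-i_1-\gamma}$ to transfer a factor of $2^{-i_1}$ from the large slot to the small one, and then applies the row-sum bound by summing over \emph{all} levels $i_k$ at once (the $x^{i_k}$ have disjoint support, so this becomes a full sum over $[n]$). The result is a weight $2^{-2i_1}s_{i_1}$ on the small slot, which sums to $\|x\|_2^2$ rather than $\|x\|_1$; combined with $\|x\|_1^{k-2}\le n^{(k-2)/2}\|x\|^{k-2}$ from the middle $k-2$ slots, this gives $k^2 m\,n^{(k-2)/2}2^{-\gamma}\|x\|^k$, and the choice $\gamma=\lg(amn^{(k-2)/2}/b)$ then yields exactly $k^2 b/a$. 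Your threshold split provides no small level to borrow from, and that is why the algebra does not close.
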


\begin{proof}
	Let $x\in\set{0,\pm 2^{-\ell}:\ell\in \N}$ and write $x=\sum_{i\in \N} 2^{-i} x^i$ with $x^i\in \set{0, \pm 1}^n$ (the $x^i$ have pairwise disjoint support and are hence orthogonal).  Define $s_i=\size{\supp x^i} = \norm{x^i}^2$ so that 
	\begin{align}
		\norm{x}_1 &= \sum_{i \in \N} 2^{-i} s_i &\text{and}&&
		\norm{x}_2^2 &= \sum_{i \in \N} 2^{-2i} s_i.
	\end{align}
	Note that all sums have only finitely many nonzero terms.  We are interested in bounding
	\begin{align}
		\abs{B(x,\dots, x)}
		&\leq \sum_{i\in \N^k} \parens{\prod_{j=1}^k 2^{-i_j}} \abs{B(x^{i_1}, \ldots, x^{i_k})}.
	\end{align}

We split this sum into two parts, bounding separately the sums over the index sets
	\begin{align}
		P &= \set{i\in \N^k: \max_j\, i_j - \min_j\, i_j < \gamma} &\text{and} &&
		Q &= \N^k \setminus P
	\end{align}
for some $\gamma \geq 0$ to be determined later.  For the sum over $i\in P$ we have
	\begin{align}
		\sum_{i\in P} \parens{\prod_{j=1}^k 2^{-i_j}} \abs{B(x^{i_1}, \ldots, x^{i_k})}
		&\leq \sum_{i\in P} \parens{\prod_{j=1}^k 2^{-i_j}} \bound \prod_{j=1}^k \sqrt{s_{i_j}} \\
		&= \bound \sum_{i\in P} \parens{\prod_j \parens{2^{-2i_j} s_{i_j}}^{k/2}}^{1/k} \\
		&\leq \frac{\bound}{k} \sum_{i\in P}\sum_j \parens{2^{-2i_j}s_{i_j}}^{k/2},
\intertext{where the final step uses the AM-GM inequality. Each $\ell\in \N$ appears at most $k(2\gamma)^{k-1}$ times in elements of $P$ (as each time $\ell$ appears in some position the remaining $k-1$ terms must all be between $\ell-\gamma$ and $\ell+\gamma$), so}
		\frac{\bound}{k} \sum_{i\in P} \sum_j \parens{2^{-2i_j}s_{i_j}}^{k/2}
		&\leq \bound\, (2\gamma)^{k-1} \sum_{\ell\in \N} \parens{2^{-2\ell}s_{\ell}}^{k/2}\\
		&\leq  \bound\, (2\gamma)^{k-1} \parens{\sum_\ell 2^{-2\ell} s_\ell}^{k/2}\\
		&=  \bound\, (2\gamma)^{k-1} \norm{x}^k
	\end{align}
	where we have used that $\sum_i a_i^{k/2} \leq (\sum_i a_i)^{k/2}$ for nonnegative $a_i$ and $k\geq 2$.
  
	Now we focus on bounding the sum over $i\in Q$.  For each $i\in Q$ we move $\min i$ to $i_1$ and $\max i$ to $i_k$.  Such a reordered index vector corresponds to at most $k^2$ original vectors, so we have
	\begin{align}
		\sum_{i\in Q} \parens{\prod_{j=1}^k 2^{-i_j}} \abs{B(x^{i_1},\dots,x^{i_k})}
		&\leq k^2\sum_{i\in \N^{k-1}} \sum_{i_k\geq i_1 + \gamma} \parens{\prod_{j=1}^k 2^{-i_j}} \abs{B(x^{i_1},\dots,x^{i_k})} \\
		&\leq k^2 \sum_{i\in \N^{k-1}} 2^{-2i_1-\gamma}\parens{\prod_{j=2}^{k-1} 2^{-i_j}} \sum_{i_k\in \N} \abs{B(x^{i_1},\dots,x^{i_k})}. \label{eqn:qbound}
	\end{align}
		
	For fixed $i_1,\dots,i_{k-1}$,
	\begin{align}
		\sum_{i_k\in \N} \abs{B(x^{i_1},\dots,x^{i_k})}
		&\leq \sum_{i_k\in \N}\ \sum_{\ell_1 \in \supp\! x^{i_1}}\!\!\cdots\!\! \sum_{\ell_{k} \in \supp\! x^{i_{k}}} \abs{B(e_{\ell_1},\dots,e_{\ell_{k}})}\\
		&= \sum_{\ell_1 \in \supp\! x^{i_1}}\!\!\cdots\!\! \sum_{\ell_{k-1} \in \supp\! x^{i_{k-1}}}\ \sum_{\ell_k\in [n]} \abs{B(e_{\ell_1},\dots,e_{\ell_{k}})}\\
		&\leq \sum_{\ell_1 \in \supp\! x^{i_1}}\!\!\cdots\!\! \sum_{\ell_{k-1} \in \supp\! x^{i_{k-1}}} m\\
		&= m\prod_{j=1}^{k-1}s_{i_j},
	\end{align} 
	where the last inequality is due to hypothesis \eqref{row-sum}.  Plugging this into the bound \eqref{eqn:qbound} above gives
		
		\begin{align}
		\sum_{i\in Q} \parens{\prod_{j=1}^k 2^{-i_j}} \abs{B(x^{i_1},\dots,x^{i_k})}
		&\leq k^2 \sum_{i\in \N^{k-1}} 2^{-2i_1-\gamma}\parens{\prod_{j=2}^{k-1} 2^{-i_j}} m\prod_{j=1}^{k-1} s_{i_j} \label{degree_bound}\\
		&= k^2 2^{-\gamma} m \sum_{i_1\in \N} 2^{-2i_1} s_{i_1} \sum_{i_2,\dots,i_{k-1}} \prod_{j=2}^{k-1} 2^{-i_j}s_{i_j}\\
		&= k^2 m 2^{-\gamma} \norm{x}^2 \norm{x}_1^{k-2}\\
		&\leq k^2 m n^{(k-2)/2} 2^{-\gamma} \norm{x}^k,
	\end{align}
	
    using the fact that $\norm{x}_1 \leq \sqrt{n} \norm{x}$ (by Cauchy-Schwarz). Putting everything together, we have 
	\begin{align}
		\abs{B(x,\ldots, x)}/\norm{x}^k
		\leq \bound\, (2\gamma)^{k-1} + k^2 m n^{(k-2)/2} 2^{-\gamma}.
	\end{align}
    Finally, set $\gamma = \lg (a m n^{(k-2)/2}/ \bound)$ (which is non-negative by the restriction on $a$) to get
	\begin{align}
	  \abs{B(x,\dots,x)}/\norm{x}^k 
	  &\leq \bound\, (\lg^{k-1} (a^2 m^2 n^{k-2} / \bound^2) + k^2 / a)
	\end{align}
	as desired. 
\end{proof}

\begin{lemma}
    \label{lem:continuum}
    Suppose $B$ is a $k$-linear form such that
    \begin{align} 
        \abs{B(x,\dots, x)} \leq \bound \norm{x}^k
    \end{align}
    for every $x \in \set{0,\pm 2^{-\ell}: \ell\in\N}^n$, and $B(e_{i_1},\dots,e_{i_k}) = 0$ whenever $i_1,\dots,i_k$ are not all distinct.  Then $\norm{B} \leq 2^k \bound$.
\end{lemma}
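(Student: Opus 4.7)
The plan is to establish the diagonal bound $\abs{B(x,\dots,x)} \leq 2^k b\, \norm{x}^k$ for every $x \in \R^n$ and then invoke the Friedman--Wigderson identity $\norm{B} = \sup_x \abs{B(x,\dots,x)}/\norm{x}^k$, valid for symmetric $B$ (which is the case here, inherited from Lemma~\ref{lem:powers_of_two}), to conclude $\norm{B} \leq 2^k b$. The mechanism for passing from the hypothesis on $D^n \defeq \set{0, \pm 2^{-\ell} : \ell \in \N}^n$ to all of $\R^n$ will be a coordinate-wise randomized rounding.

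Given $x \in \R^n$, I would construct a random vector $\tilde x$ coordinate-by-coordinate. For each $i$ with $x_i \neq 0$, pick the unique $\ell_i \in \Z$ with $\abs{x_i} \in (2^{-\ell_i-1}, 2^{-\ell_i}]$ and let $\tilde x_i = \sgn(x_i)\,2^{-\ell_i}$ with probability $p_i \defeq 2^{\ell_i+1}\abs{x_i}-1 \in [0,1]$, else $\tilde x_i = \sgn(x_i)\,2^{-\ell_i-1}$; set $\tilde x_i = 0$ when $x_i = 0$, and let the coordinates be independent. A short computation verifies $\E[\tilde x_i] = x_i$ and the deterministic bound $\abs{\tilde x_i} \leq 2\abs{x_i}$. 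Every nonzero $\tilde x_i$ is a signed power of two, so dividing by $\lambda \defeq \max_i \abs{\tilde x_i}$ places $\tilde x / \lambda \in D^n$; homogeneity of the hypothesis then yields $\abs{B(\tilde x,\dots,\tilde x)} \leq b \norm{\tilde x}^k$ for every realization, and combining with $\norm{\tilde x}^2 \leq 4\norm{x}^2$ gives the pointwise bound $\abs{B(\tilde x,\dots,\tilde x)} \leq 2^k b\, \norm{x}^k$.

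To finish I would take expectations. Expanding
\begin{align}
B(\tilde x,\dots,\tilde x) = \sum_{i_1,\dots,i_k} \tilde x_{i_1}\cdots \tilde x_{i_k}\, B(e_{i_1},\dots,e_{i_k})
\end{align}
and using the hypothesis that $B$ vanishes on any non-distinct tuple, the sum reduces to pairwise distinct $i_1,\dots,i_k$; on those, independence of the $\tilde x_{i_j}$ gives $\E[\tilde x_{i_1}\cdots \tilde x_{i_k}] = \prod_j x_{i_j}$. Hence $\E[B(\tilde x,\dots,\tilde x)] = B(x,\dots,x)$, and taking absolute values together with the pointwise bound above yields $\abs{B(x,\dots,x)} \leq \E\abs{B(\tilde x,\dots,\tilde x)} \leq 2^k b\, \norm{x}^k$ for all $x \in \R^n$, as required.

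I do not foresee a serious obstacle here; the conceptual point that makes everything click is that the vanishing of $B$ on non-distinct tuples is precisely what makes the coordinate-wise randomized rounding an unbiased estimator of $B(x,\dots,x)$ --- otherwise the higher moments $\E[\tilde x_i^2], \E[\tilde x_i^3], \dots$ (which differ from $x_i^2, x_i^3, \dots$) would enter the expansion and destroy this identity.
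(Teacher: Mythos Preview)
Your proof is correct and follows essentially the same randomized-rounding argument as the paper: round each coordinate independently to one of the two nearest signed powers of two so that the expectation is preserved, use the vanishing of $B$ on non-distinct tuples to get $\E[B(\tilde x,\dots,\tilde x)] = B(x,\dots,x)$, and combine with the deterministic bound $\norm{\tilde x} \leq 2\norm{x}$. The only cosmetic difference is that the paper pre-scales $x$ so that $\abs{x_i}\leq 1/2$ (forcing the rounded vector directly into $D^n$) and then uses an existence argument, whereas you invoke homogeneity to rescale $\tilde x$ into $D^n$ and take an expectation of the pointwise bound---both routes are equivalent.
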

\begin{proof}
    Let $x\in\R^n$ be a vector which maximizes $\abs{B(x,\dots,x)}/\norm{x}^k = \norm{B}$. Without loss of generality, scale $x$ so that $\abs{x_i} \leq 1/2$ for all $i \in [n]$.

    Choose a random vector $z \in \set{0,\pm 2^{-\ell}: \ell\in \N}^n$ by picking each coordinate $z_i$ independently as follows:

    \begin{quote}
        If $x_i = 0$ then $z_i = 0$.  Otherwise, write $\abs{x_i} = 2^{\ell_i} (1+\ep_i)$ for some integer $\ell_i$ and some value of $\ep_i \in [0,1)$.  Let $z_i = \text{sign}(x_i) 2^{\ell_i}$ with probability $1-\ep_i$ and $z_i = \text{sign}(x_i) 2^{\ell_i+1}$ with probability $\ep_i$.
    \end{quote}

    We can see that $\E[z_i] = x_i$ for all $i\in [n]$ 
    and
    \begin{align}
        \E[B(z,\dots, z)] 
        &= \sum_{i\in[n]^k} \E[B(z_{i_1} e_{i_1},\dots,z_{i_k}e_{i_k})] = \sum_{i\in[n]^k} \parens{\prod_{j=1}^k \E[z_{i_j}]}B(e_{i_1},\dots,e_{i_k})\\
        &= \sum_{i\in[n]^k} \parens{\prod_{j=1}^k x_{i_j}}B(e_{i_1},\dots,e_{i_k})
        = B(x,\dots,x).
    \end{align}
    
    Thus there is a vector $z$ for which $\abs{B(z, \dots, z)}\geq \abs{B(x, \dots, x)}$.  Observe that by construction $\norm{z} \leq 2\norm{x}$, so
    \begin{align}
        \abs{B(x,\dots,x)}
        &\leq \abs{B(z,\dots,z)} 
        \leq b \norm{z}^k
        \leq 2^k \bound \norm{x}^k.
    \end{align} 
    Consequently, $\norm{B} = \abs{B(x,\dots,x)}/\norm{x}^k \leq 2^k \bound$.
\end{proof}
\ 

Finally, we put all of these lemmas together to prove the theorem.

\begin{proof}[Proof of Theorem \ref{thm:inverse_mixing_hypergraph}]
    Suppose $H$ is a $k$-uniform hypergraph on $n$ vertices with maximum degree $r$ satisfying
    \begin{align}
        \abs{e(V_1,\dots,V_k) - \alpha e_K(V_1,\dots,V_k)} 
        \leq \rho \sqrt{\prod_{i=1}^k \size{V_i}}
    \end{align}
    for every choice of disjoint sets $V_1,\dots,V_k\subseteq V(H)$.  By Lemma \ref{lem:expansion_equivalence}, the adjacency form $A$ in fact satisfies 
    \begin{align}
        \abs{A(x^1,\dots,x^k) - \alpha A_K(x^1,\dots,x^k)} 
        \leq \rho k^{k/2} \prod_{i=1}^k \norm{x^i}
    \end{align}
    for every $x^1,\dots,x^k\in\set{0,1}^n$.  Taking $B = A - \alpha A_K$ in Lemma \ref{lem:zero_pm_one} gives that
    \begin{align}
        \abs{B(x^1,\dots,x^k)} \leq \rho (2k)^{k/2} \prod_{i=1}^k \norm{x^i}
    \end{align}
    for all $x^1,\dots,x^k\in\set{0,\pm 1}^n$.  Since for any fixed $i_1,\dots,i_{k-1}$
    \begin{align}
        \sum_{j=1}^n \abs{B(e_{i_1}, \dots,e_{i_{k-1}},e_j)}
        &\leq \sum_{j=1}^n \abs{A(e_{i_1}, \dots,e_{i_{k-1}},e_j)}+\alpha\sum_{j=1}^n \abs{A_K(e_{i_1}, \dots,e_{i_{k-1}},e_j)}\\
        &\leq r+\alpha n, 
    \end{align}
    we can use $m = r + \alpha n$, $b = \rho\, (2k)^{k/2}$ and $a = (2k)^{k/2}$ in Lemma \ref{lem:powers_of_two} (using Proposition \ref{thm:rho_sanity_bound} to guarantee the constraint on $a$) to find that
    \begin{align}
        \abs{B(x,\dots,x)} \leq \rho\, (2k)^{k/2} \parens{\log^{k-1}\parens{\frac{(d+\alpha n)^2}{\rho^2} n^{k-2}} + k^2 (2k)^{-k/2}} \norm{x}^k
    \end{align}
    for every $x\in\set{0, \pm 2^{-\ell}: \ell\in\N}^n$.  Finally, by Lemma \ref{lem:continuum} we find that
    \begin{align}
        \lambda_{2,\alpha}(H) 
        = \norm{B} 
        &\leq 2^{3k/2} k^{k/2} \rho \parens{\log^{k-1}\parens{\frac{(d+\alpha n)^2}{\rho^2} n^{k-2}} + k^2 (2k)^{-k/2}}\\
        &= \rho\, O(\log^{k-1} ((r+\alpha n)n^{k-2}/\rho) + 1).
    \end{align}
\end{proof}


\subsection{Comparison with the Friedman-Wigderson definition of $\lambda_2$}\label{sec:counterexamples}

In this section we prove an inverse mixing lemma for the Freidman-Wigderson definition of the second eigenvalue.  We will see that this result, while tight, is not as useful as theorem \ref{thm:inverse_mixing_hypergraph}, and we briefly discuss the reason for this.

First, we define a useful $k$-linear form and evaluate its norm.

\begin{definition}
   Let $D = J - A_K$ denote the $k$-linear form with $D(e_{i_1},\dots,e_{i_k}) = 1$ if and only if the indices $i_j$ are not all distinct (and 0 otherwise).

\end{definition}

\begin{proposition} \label{D_bound}
$\norm{D} = \Theta(n^{(k-2)/2})$.
\end{proposition}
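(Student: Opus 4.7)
The plan is to establish matching upper and lower bounds by exploiting the combinatorial meaning of $D$: it counts ordered $k$-tuples of indices with at least one repetition.

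For the upper bound, I would write $D$ as a union over the collision pattern. Since $D(e_{i_1},\dots,e_{i_k}) = 1$ iff some pair $a<b$ satisfies $i_a = i_b$, applying the triangle inequality over all $\binom{k}{2}$ such pairs gives
\begin{align}
    \abs{D(x^1,\dots,x^k)} \leq \sum_{a<b} \sum_{\substack{i_1,\dots,i_k \\ i_a=i_b}} \prod_{j=1}^k \abs{x^j_{i_j}}.
\end{align}
For each fixed pair $(a,b)$ the inner sum factors as $\langle |x^a|, |x^b|\rangle \prod_{j\neq a,b} \norm{x^j}_1$. Cauchy--Schwarz bounds the inner product by $\norm{x^a}\norm{x^b}$ and bounds each $\ell^1$ norm by $\sqrt{n}\norm{x^j}$, yielding a total of $\binom{k}{2} n^{(k-2)/2} \prod_j \norm{x^j}$, so $\norm{D} = O(n^{(k-2)/2})$.

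For the lower bound, I would just plug in $x^1=\cdots=x^k = \ones$. Direct evaluation gives $J(\ones,\dots,\ones) = n^k$ and $A_K(\ones,\dots,\ones) = n(n-1)\cdots(n-k+1)$, so
\begin{align}
    D(\ones,\dots,\ones) = n^k - n(n-1)\cdots(n-k+1) = \binom{k}{2} n^{k-1} + O(n^{k-2}),
\end{align}
by expanding the falling factorial. Dividing by $\norm{\ones}^k = n^{k/2}$ gives $\norm{D} \geq \binom{k}{2}n^{(k-2)/2} - O(n^{(k-4)/2}) = \Omega(n^{(k-2)/2})$, matching the upper bound.

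There is no real obstacle here — the only thing to be careful about is the expansion of the falling factorial in the lower bound (to confirm the leading-order term is indeed $\binom{k}{2}n^{k-1}$ and not something lower order) and keeping track of the $\ell^1$-versus-$\ell^2$ conversions in the upper bound via Cauchy--Schwarz. The proof is essentially just packaging the observation that ``collisions'' cost a factor of $n^{-1/2}$ relative to ``no collisions'' when moving to the spectral norm.
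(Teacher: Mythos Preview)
Your proof is correct and follows essentially the same approach as the paper: both obtain the lower bound by evaluating at the all-ones vector and the upper bound by overcounting collision tuples and then applying Cauchy--Schwarz together with $\norm{\cdot}_1 \leq \sqrt{n}\norm{\cdot}_2$. The only cosmetic differences are that you work with general inputs $x^1,\dots,x^k$ (rather than the symmetric diagonal $x,\dots,x$) and sum over the $\binom{k}{2}$ colliding pairs directly, which yields the slightly sharper constant $\binom{k}{2}$ in place of the paper's $k^2$.
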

\begin{proof}
    First of all note that
    \begin{align}
        \norm{D}
        &\geq \Abs{D(\vec 1, \dots, \vec 1)}/\Norm{\vec 1}^k\\
        &= \frac{n^k - n!/(n-k)!}{n^{k/2}} = \Omega(n^{(k-2)/2}).
    \end{align}
    
    On the other hand, for any $x\in \R^n$
    \begin{align}
        \abs{D(x,\dots,x)} 
        &\leq \sum_{i\in [n]^k} \parens{\prod_{j=1}^k \abs{x_{i_j}}} \abs{D(e_{i_1},\dots,e_{i_k})}\\
        &= \mathop{\sum_{i\in [n]^k}}_{i_j\text{ not all distinct}}\prod_{j=1}^k \abs{x_{i_j}}\\
        &\leq k^2 {\sum_{i\in [n]^{k-1}}} \abs{x_{i_1}} \prod_{j=1}^{k-1} \abs{x_{i_j}}\\
        &= k^2 \sum_{i_1=1}^n \abs{x_{i_1}}^2 \prod_{j=2}^{k-1} \sum_{i_j = 1}^n \abs{x_{i_j}}\\
        &= k^2 \norm{x}_2^2 \norm{x}_1^{k-2}\\
        &\leq k^2 n^{(k-2)/2} \norm{x}^k,
    \end{align}
    as desired.
\end{proof}


\begin{theorem}
\label{inverse_mixing_fw}

 Let $H$ be a $k$-uniform hypergraph with maximum degree $r$, and suppose that for every choice of disjoint sets $V_1,\dots,V_k\subset V(H)$, 
    \begin{align}
        \abs{e(V_1,\dots,V_k)-\frac{r}{n} \prod_{i} \size{V_i}}
        \leq \rho\sqrt{\prod_{i = 1}^k \size{V_i}}\,.
    \end{align}  
    Then 
    \begin{align}
        \lambda_2(H) = \Theta(rn^{(k-4)/2}) \pm O\parens{(\log^{k-1}(rn^{k-2}/\rho)+1)\rho}.
    \end{align}
    
\end{theorem}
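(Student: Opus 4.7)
\medskip
\noindent\textbf{Proof plan.} The plan is to reduce the Friedman--Wigderson second eigenvalue $\lambda_2(H) = \norm{A - \tfrac{r}{n} J}$ to the density-based eigenvalue $\lambda_{2,r/n}(H) = \norm{A - \tfrac{r}{n} A_K}$, which is already controlled by Theorem~\ref{thm:inverse_mixing_hypergraph}. The bridge is the algebraic identity
\begin{align}
    A - \tfrac{r}{n}J \;=\; \Parens[\big]{A - \tfrac{r}{n}A_K} - \tfrac{r}{n}D,
\end{align}
which follows directly from $J = A_K + D$ (the defining decomposition of $D$ in the previous subsection). The point is that the extra form $(r/n)D$ has non-trivial spectral norm, and this is precisely the ``irreducible'' term that distinguishes the two notions of second eigenvalue.

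\medskip
First, I would apply Theorem~\ref{thm:inverse_mixing_hypergraph} with $\alpha = r/n$. The hypothesis of Theorem~\ref{inverse_mixing_fw} is exactly the hypothesis of Theorem~\ref{thm:inverse_mixing_hypergraph} for this choice of $\alpha$, so I immediately obtain
\begin{align}
    \lambda_{2,r/n}(H) \;=\; \norm{A - \tfrac{r}{n}A_K} \;=\; O\parens{\rho\,(\log^{k-1}((r+r)n^{k-2}/\rho)+1)} \;=\; O\parens{\rho\,(\log^{k-1}(rn^{k-2}/\rho)+1)}.
\end{align}
Second, I would invoke Proposition~\ref{D_bound} to get $\norm{D} = \Theta(n^{(k-2)/2})$, and hence $\tfrac{r}{n}\norm{D} = \Theta(rn^{(k-4)/2})$.

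\medskip
Finally, the theorem follows from two applications of the triangle inequality to the identity above. The upper direction gives
\begin{align}
    \lambda_2(H) \;\leq\; \Norm[\big]{A - \tfrac{r}{n}A_K} + \tfrac{r}{n}\Norm{D} \;=\; O(rn^{(k-4)/2}) + O\parens{\rho(\log^{k-1}(rn^{k-2}/\rho) + 1)},
\end{align}
while the reverse triangle inequality yields
\begin{align}
    \lambda_2(H) \;\geq\; \tfrac{r}{n}\Norm{D} - \Norm[\big]{A - \tfrac{r}{n}A_K} \;=\; \Omega(rn^{(k-4)/2}) - O\parens{\rho(\log^{k-1}(rn^{k-2}/\rho) + 1)}.
\end{align}
Together these are the claimed $\lambda_2(H) = \Theta(rn^{(k-4)/2}) \pm O(\rho(\log^{k-1}(rn^{k-2}/\rho)+1))$.

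\medskip
There is no real obstacle here: the substantive work was already done in proving Theorem~\ref{thm:inverse_mixing_hypergraph} and Proposition~\ref{D_bound}. The only conceptual point worth emphasizing is why the $\Theta(rn^{(k-4)/2})$ term is unavoidable when $k \geq 3$: the form $J$ has nonzero entries on repeated-index coordinates where the adjacency form $A$ is always zero, and this ``diagonal mass'' contributes a spectral error of order $\tfrac{r}{n}\norm{D}$ that combinatorial expansion cannot cancel. This is exactly the phenomenon noted after the definition of $\lambda_{2,\alpha}$ and motivates why the authors preferred the density-based definition.
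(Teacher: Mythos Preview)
Your proposal is correct and follows essentially the same approach as the paper: apply Theorem~\ref{thm:inverse_mixing_hypergraph} with $\alpha = r/n$, use Proposition~\ref{D_bound} for $\norm{D}$, and conclude via the triangle and reverse triangle inequalities applied to the identity $A - \tfrac{r}{n}J = (A - \tfrac{r}{n}A_K) - \tfrac{r}{n}D$. The paper's proof is identical in substance, differing only in that it explicitly records the observation $e_K(V_1,\dots,V_k) = \prod_i \size{V_i}$ for disjoint sets to match the hypotheses.
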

\begin{proof}  Set $\alpha = \tfrac{r}{n}$, we have that $r = \Theta(\alpha n)$.  Observe that if $V_1,\dots,V_k$ are disjoint, then 
\begin{align}
    \abs{e(V_1,\dots,V_k)-\tfrac{r}{n}\, e_K(V_1,\dots,V_k)}
    = \abs{e(V_1,\dots,V_k)-\frac{r}{n} \prod_{i} \size{V_i}}
    \leq \rho \sqrt{\prod_i \size{V_i}}.
\end{align}

By Theorem \ref{thm:inverse_mixing_hypergraph}, 

\begin{align}
\norm{A_H-\tfrac{r}{n}A_K} = O\parens{(\log^{k-1}(rn^{k-2}/\rho)+1)\rho},
\end{align}





and hence by Proposition \ref{D_bound}

\begin{align}\label{eqn:lambda2bound}
    \lambda_2(H) 
    &= \norm{A_H-\tfrac{r}{n}J}
    \leq \tfrac{r}{n}\norm{D} + \norm{A_H-\tfrac{r}{n}A_K}\\
    &=  O\parens{rn^{(k-4)/2} + (\log^{k-1}(rn^{k-2}/\rho)+1)\rho}.
\end{align}

A similar calculation to the one above also gives a lower bound of
\begin{align}\label{eqn:lambda_2_lower_bound}
    \lambda_2(H) \geq \Omega(rn^{(k-4)/2}) - \rho\, O(\log^{k-1}(rn^{k-2}/\rho) + 1).
\end{align}
\end{proof}

If the first term dominates in \eqref{eqn:lambda_2_lower_bound} then the asymptotics of $\lambda_2$ are independent of $\rho$ and so there is no interesting inverse mixing for this definition of the second eigenvalue.  We now show that this is in fact typically the case by examining $\rho_\alpha$ for random hypergraphs.

To get some idea about the typical magnitude of $\rho_\alpha$, we analyze the Erd\H{o}s-Renyi random hypergraph $G(n, \alpha, k)$, in which each of the $\binom{n}{k}$ $k$-tuples is taken as a hyperedge independently with probability $\alpha$.

\begin{proposition}\label{thm:random_rho}
    For the Erd\H{o}s-Renyi random hypergraph $G = G(n, \alpha, k)$, with high probability $\rho_\alpha(G) = O(\sqrt{n})$.
\end{proposition}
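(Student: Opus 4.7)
The plan is a direct Hoeffding plus union bound argument. Fix pairwise disjoint nonempty subsets $V_1,\dots,V_k \subseteq V$ and write $N \defeq \size{V_1}\cdots\size{V_k}$. Because the $V_i$ are disjoint, each ordered tuple $(v_1,\dots,v_k) \in V_1\times\cdots\times V_k$ determines a distinct unordered $k$-set, so
\begin{align}
e(V_1,\dots,V_k) = \sum_{v_1\in V_1}\cdots\sum_{v_k\in V_k} \mathbb{1}[\set{v_1,\dots,v_k}\in E(G)]
\end{align}
is a sum of $N$ independent Bernoulli$(\alpha)$ random variables with mean $\alpha N$. Hoeffding's inequality then yields, for any $t>0$,
\begin{align}
\Pr\bracks{\abs{e(V_1,\dots,V_k) - \alpha N} > t} \leq 2 \exp\parens{-2t^2/N}.
\end{align}

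Next I would set $t = C\sqrt{n}\sqrt{N}$, so the right-hand side becomes $2\exp(-2C^2 n)$, a bound that is uniform in the sizes of the $V_i$. The number of ordered $k$-tuples of pairwise disjoint (possibly empty) subsets of $V$ is at most $(k+1)^n$, since each vertex is either placed into exactly one $V_i$ or left out. Hence a union bound gives
\begin{align}
\Pr\bracks{\rho_\alpha(G) > C\sqrt{n}} \leq 2(k+1)^n \exp(-2C^2 n).
\end{align}
Choosing $C$ large enough that $2C^2 > \log(k+1)$ (which depends only on $k$), this probability tends to $0$ as $n\to\infty$, proving $\rho_\alpha(G) = O(\sqrt n)$ with high probability.

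I do not expect a real obstacle here. The only small thing to double-check is that Hoeffding applies uniformly over small $N$ (when some $\size{V_i}$ is a constant or even $1$), but since the $\pm 1$ bounded concentration inequality does not degrade with the variance, the same tail bound $2\exp(-2C^2 n)$ works for every disjoint tuple. The argument in fact shows the stronger statement that $\rho_\alpha(G) = O(\sqrt{n \log k})$, where the constant depends only on the uniformity $k$ but is independent of the density $\alpha$.
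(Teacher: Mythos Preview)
Your argument is correct and essentially identical to the paper's proof: both apply Hoeffding to the sum of $\prod_i |V_i|$ independent Bernoulli$(\alpha)$ indicators, take a union bound over the at most $(k+1)^n$ ordered disjoint tuples, and choose the deviation parameter proportional to $\sqrt{n}$ so that the exponential beats the $(k+1)^n$ factor. The paper phrases the final step by solving for $\rho$ in terms of a failure probability $\delta$ and then setting $\delta = e^{-n}$, but this is exactly your choice of $C$ with $2C^2 > \log(k+1)$.
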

\begin{proof}
    For fixed disjoint sets of vertices $V_1,\dots,V_k$, note that $e(V_1,\dots,V_k)$ is a sum of $\prod_{i=1}^k \size{V_i}$ independent Bernoulli random variables each with mean $\alpha$.

    By Hoeffding's inequality \cite{H}, its deviation from its mean satisfies
    

    \begin{align}
        \Pr\bracks{\Abs[\Big]{e(V_1,\dots,V_k) - \alpha \prod_i \size{V_i}} > t}
        \leq 2e^{-2 t^2/\prod_i \size{V_i}}.
    \end{align}
    Plugging in $t = \rho\sqrt{\prod_i \size{V_i}}$ gives
    \begin{align}
        \Pr\bracks{\Abs[\Big]{e(V_1,\dots,V_k) - \alpha \prod_i \size{V_i}} > \rho\sqrt{\prod_i \size{V_i}}}
        \leq 2e^{-2\rho^2}.
    \end{align}

    Finally, taking a union bound over all $\leq (k+1)^n$ choices of subsets $V_i$, we find that as long as
    \begin{align}
        \delta &\geq 2(k+1)^n e^{-2\rho^2},& \text{or equivalently}&&
        \rho &\geq \sqrt{\frac{n\log (k+1) + \log (2/\delta)}{2}},
    \end{align}
    
    then with probability at least $1-\delta$ the random hypergraph satisfies
    \begin{align}\label{eqn:hypergraph_rho}
        \abs{e(V_1,\dots,V_k)-\alpha e_K(V_1,\dots,V_k)}
        \leq \rho\sqrt{\prod_{i = 1}^k \size{V_i}}.
    \end{align}  
for \emph{all} choices of $V_1,\dots,V_k$.  In particular, for $\delta = e^{-n}$ we have $\rho_\alpha(G) = O(\sqrt{n})$ with probability at least $1-\delta$.

\end{proof}

We can prove a converse in the case where $\alpha$ is constant with respect to $n$.

\begin{proposition} 
For any hypergraph $H$ and any constant $\alpha\in [0,1]$, 
\begin{align*}\rho_{\alpha}(H) \geq \frac{\alpha(1-\alpha)}{\sqrt{\alpha^2 + (1-\alpha^2)}}\sqrt{n-k+1}.
\end{align*}
\end{proposition}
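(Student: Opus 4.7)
The plan is to test the expansion condition on sets of a very simple form. Fix any $(k-1)$-subset $S = \{v_2,\dots,v_k\} \subseteq V$, and consider tuples $(V_1,\{v_2\},\dots,\{v_k\})$ where $V_1 \subseteq V \setminus S$. Let $N(S) = \{v \in V \setminus S : \{v\}\cup S \in E(H)\}$ and set $d = |N(S)|$ and $n' = n-k+1 = |V\setminus S|$. With these singleton witnesses, $e(V_1,\{v_2\},\dots,\{v_k\}) = |V_1 \cap N(S)|$, and the quantity inside the $\max$ in the definition of $\rho_\alpha$ reduces to $\bigl||V_1 \cap N(S)| - \alpha|V_1|\bigr|/\sqrt{|V_1|}$. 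So the problem becomes: given a subset $N(S)$ of size $d$ inside a ground set of size $n'$, choose $V_1$ to make this ratio large.

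The two natural trial sets are $V_1 = N(S)$ (giving ratio $(1-\alpha)\sqrt{d}$, since the ``edge count'' equals $|V_1|$) and $V_1 = (V\setminus S)\setminus N(S)$ (giving $\alpha\sqrt{n'-d}$, since the edge count is zero). At least one of these is nonempty whenever $0 < d < n'$, and in the degenerate cases $d\in\{0,n'\}$ one can instead take $V_1 = V\setminus S$, which yields $\alpha\sqrt{n'}$ or $(1-\alpha)\sqrt{n'}$, each easily dominating the target bound. Hence
\begin{align}
    \rho_\alpha(H) \;\geq\; \max\bigl\{(1-\alpha)\sqrt{d},\ \alpha\sqrt{n'-d}\bigr\}.
\end{align}

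The remaining step is an elementary one-variable optimization: show that for every $d \in [0,n']$,
\begin{align}
    \max\bigl\{(1-\alpha)\sqrt{d},\ \alpha\sqrt{n'-d}\bigr\} \;\geq\; \frac{\alpha(1-\alpha)}{\sqrt{\alpha^2+(1-\alpha)^2}}\sqrt{n'}.
\end{align}
Indeed, if $(1-\alpha)\sqrt{d}$ falls below the right-hand side, then $d < \alpha^2 n'/(\alpha^2+(1-\alpha)^2)$, whence $n' - d > (1-\alpha)^2 n'/(\alpha^2+(1-\alpha)^2)$, which forces $\alpha\sqrt{n'-d}$ to exceed the target. The two expressions coincide at the critical value $d^* = \alpha^2 n'/(\alpha^2+(1-\alpha)^2)$, which is precisely where the worst case occurs.

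There is no real obstacle here; the only bookkeeping is ensuring the trial sets are pairwise disjoint and nonempty, which I handle via the $d\in\{0,n'\}$ cases above, and assuming $n \geq k$ (else the statement is vacuous). I remark that the proposition as typeset has $\sqrt{\alpha^2 + (1-\alpha^2)} = 1$, but I read this as a typographical slip for $\sqrt{\alpha^2 + (1-\alpha)^2}$; the approach above proves the stronger (latter) form, which immediately implies the weaker one.
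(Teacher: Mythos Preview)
Your proposal is correct and follows essentially the same approach as the paper: fix $k-1$ singleton sets, then take the remaining set $V_1$ to be either the neighborhood of those singletons or its complement, and optimize over the two resulting lower bounds $(1-\alpha)\sqrt{d}$ and $\alpha\sqrt{n'-d}$. You are in fact slightly more careful than the paper, since you explicitly dispose of the degenerate cases $d\in\{0,n'\}$ (where one of the trial sets is empty), and you correctly flag the typo $(1-\alpha^2)$ for $(1-\alpha)^2$.
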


\begin{proof}

Set $V_1\dots, V_{k-1}$ to be distinct singletons $v_1, \dots v_{k-1}$.  Define \begin{align*} S & = \{v\in V-\{v_1,\dots v_{k-1}\}: \{v_1,\dots v_{k-1},v\}\in E(H)\}\\
 \text{and\quad} T & = \{v\in V-\{v_1,\dots v_{k-1}\}: \{v_1,\dots v_{k-1},v\}\notin E(H)\}\end{align*}
 
 Observe that $\size{S}+\size{T} = n-k+1$.  

 Also, that \begin{align*}\rho_{\alpha}(H)&\geq \max \parens{\frac{\Abs[\big]{e(\set{v_1},\dots,\set{v_{k-1}},S)-\alpha\size{S}}}{\sqrt{\size{S}}}, \frac{\Abs[\big]{e(\set{v_1},\dots,\set{v_{k-1}},T)-\alpha\size{T}}}{\sqrt{\size{T}}}}\\
& =\max \parens{(1-\alpha)\sqrt{\size{S}}, \alpha\sqrt{\size{T}}}.
 \end{align*}
 
 If $|S|\geq (n-k+1)\frac{\alpha^2}{(1-\alpha)^2 + \alpha^2}$, then \begin{align*}
 (1-\alpha)\sqrt{|S|}\geq \frac{\alpha(1-\alpha)}{\sqrt{\alpha^2 + (1-\alpha^2)}}\sqrt{n-k+1}.
 \end{align*}
 
 On the other hand, if $|S|\leq (n-k+1)\frac{\alpha^2}{(1-\alpha)^2 + \alpha^2}$, then \begin{align*}|T| = (n-k+1)-|S|\geq (n-k+1)\frac{(1-\alpha)^2}{(1-\alpha)^2 + \alpha^2},\end{align*} and \begin{align*}\alpha\sqrt{|T|}\geq \frac{\alpha(1-\alpha)}{\sqrt{\alpha^2 + (1-\alpha^2)}}\sqrt{n-k+1}.\end{align*}

\end{proof}


Combining this with the previous proposition proves

\begin{proposition} \label{prop:randomrho}
    For the dense Erd\H{o}s-Renyi random hypergraph $G = G(n,\alpha,k)$ where $\alpha \in (0,1)$ is constant with respect to $n$, with high probability $\rho_{\alpha}(G) = \Theta(\sqrt{n})$.
\end{proposition}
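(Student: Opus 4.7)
The plan is to obtain Proposition \ref{prop:randomrho} as an immediate combination of the two propositions that directly precede it; no new probabilistic or combinatorial argument is needed. The upper bound $\rho_\alpha(G) = O(\sqrt{n})$ is already delivered with high probability by Proposition \ref{thm:random_rho}, via Hoeffding's inequality on each sum $e(V_1,\dots,V_k)$ and a union bound over the at most $(k+1)^n$ choices of disjoint tuples. For the matching lower bound, we invoke the deterministic inequality
\begin{align}
\rho_\alpha(H) \;\geq\; \frac{\alpha(1-\alpha)}{\sqrt{\alpha^2 + (1-\alpha^2)}}\sqrt{n-k+1},
\end{align}
which holds for every $k$-uniform $H$, and hence with probability $1$ for $G = G(n,\alpha,k)$.

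The only point to check is that the prefactor does not degenerate. Since $\alpha\in (0,1)$ is assumed to be constant in $n$, the quantity $\alpha(1-\alpha)/\sqrt{\alpha^2 + (1-\alpha^2)}$ is a positive constant, so the deterministic bound simplifies to $\rho_\alpha(G) = \Omega(\sqrt{n})$. Combining the two sides gives $\rho_\alpha(G) = \Theta(\sqrt{n})$ with high probability, which is the claim.

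In this sense there is really no obstacle: the hard work was done in the two preceding propositions. The one subtlety worth flagging is that constancy of $\alpha$ is essential on both sides if one wants the $\Theta$: on the lower-bound side to keep the prefactor bounded away from zero, and on the upper-bound side to prevent the variance of the underlying Bernoulli variables from shrinking in a way that would tighten the Hoeffding tail and perhaps yield a sub-$\sqrt{n}$ bound. A version of the proposition uniform in $\alpha$ (in particular allowing $\alpha\to 0$ or $\alpha\to 1$ with $n$) would require revisiting both steps, but this is outside the scope of the statement as given.
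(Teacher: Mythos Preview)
Your proposal is correct and matches the paper's approach exactly: the paper's own proof is the single sentence ``Combining this with the previous proposition proves'', i.e., the upper bound from Proposition~\ref{thm:random_rho} together with the deterministic lower bound for arbitrary $H$. Your additional commentary is fine, though note that the Hoeffding-based upper bound $O(\sqrt{n})$ holds uniformly in $\alpha$, so the constancy of $\alpha$ is really only needed on the lower-bound side.
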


Assume that $\alpha = \tfrac{r}{n}$ is a positive constant independent of $n$, in other words, that $r = \Theta(n)$.

\begin{corollary}
For the dense Erd\H{o}s-Renyi random hypergraph $G = G(n,\alpha,k)$ where $\alpha \in (0,1)$ is constant with respect to $n$ and $k\geq 4$,with high probability $\lambda_2 = \Theta(n^{(k-2)/2})$.
\end{corollary}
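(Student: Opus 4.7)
The plan is to combine Theorem \ref{inverse_mixing_fw} with Proposition \ref{prop:randomrho} and verify that the main term dominates the error term in the regime $k \geq 4$ with constant $\alpha$.

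First, I would translate the hypotheses into the parameters used by Theorem \ref{inverse_mixing_fw}. Since $\alpha \in (0,1)$ is a constant and every $(k-1)$-set of vertices has $\binom{n-k+1}{1}\alpha$-expected degree with strong concentration (Chernoff plus a union bound over the $\binom{n}{k-1}$ such sets), the maximum codegree $r$ satisfies $r = \alpha n (1 + o(1)) = \Theta(n)$ with high probability. Simultaneously, Proposition \ref{prop:randomrho} gives $\rho = \rho_\alpha(G) = \Theta(\sqrt{n})$ with high probability. We may condition on both events holding.

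Next, I would plug these into the two-sided bound from Theorem \ref{inverse_mixing_fw}:
\begin{align}
\lambda_2(G) = \Theta(rn^{(k-4)/2}) \pm O\parens{(\log^{k-1}(rn^{k-2}/\rho) + 1)\rho}.
\end{align}
The main term is $\Theta(r n^{(k-4)/2}) = \Theta(n \cdot n^{(k-4)/2}) = \Theta(n^{(k-2)/2})$. For the error term, substituting $r = \Theta(n)$ and $\rho = \Theta(\sqrt n)$ gives $rn^{k-2}/\rho = \Theta(n^{k-3/2})$, whose logarithm is $\Theta(\log n)$, so the error is $O(\sqrt{n}\,\log^{k-1} n)$.

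Finally, I would observe that for $k\geq 4$ we have $(k-2)/2 \geq 1 > 1/2$, so
\begin{align}
n^{(k-2)/2} \;=\; \omega(\sqrt{n}\,\log^{k-1} n),
\end{align}
and therefore the main term $\Theta(n^{(k-2)/2})$ dominates the error term in both the upper and lower estimates supplied by Theorem \ref{inverse_mixing_fw}. This yields $\lambda_2(G) = \Theta(n^{(k-2)/2})$ with high probability, as claimed. There is really no substantive obstacle here: the result is an arithmetic check that the polynomial main term beats the logarithmic-times-$\sqrt n$ error once $k\geq 4$; the only minor point worth being careful about is passing from the "maximum codegree" hypothesis in Theorem \ref{inverse_mixing_fw} to the random model, but standard concentration handles this cleanly.
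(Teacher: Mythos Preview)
Your proposal is correct and follows essentially the same route as the paper: invoke Proposition~\ref{prop:randomrho} to get $\rho=\Theta(\sqrt{n})$, plug this together with $r=\Theta(n)$ into the two-sided estimate of Theorem~\ref{inverse_mixing_fw}, and observe that for $k\ge 4$ the main term $\Theta(n^{(k-2)/2})$ dominates the error $O(\sqrt{n}\log^{k-1} n)$. Your extra remark about concentrating the maximum codegree via Chernoff is a nice point of care that the paper handles only implicitly (it simply writes ``assume $\alpha=r/n$ is a positive constant'').
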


For $G$ that satisfies the bound in Proposition \ref{prop:randomrho}, the second term of \eqref{eqn:lambda_2_lower_bound} is $\Theta(\sqrt{n}\log^{k-1}(n))$, which is dominated by the first term if $k\geq 4$.  So, almost every hypergraph with $k \geq 4$ will not have an interesting inverse mixing lemma for the Friedman-Wigderson definition of the second eigenvalue. 

\begin{corollary}
For the dense Erd\H{o}s-Renyi random hypergraph $G = G(n,\alpha,k)$ where $\alpha \in (0,1)$ is constant with respect to $n$, with high probability $\lambda_{2,\alpha}(G) = \Omega(\sqrt{n})$ and $\lambda_{2,\alpha}(G) = O(\sqrt{n}\log^{k-1}{n})$.
\end{corollary}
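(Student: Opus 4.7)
The plan is to derive both bounds as direct specializations of the mixing-lemma framework of Section~3, combined with the estimates on $\rho_\alpha(G)$ already established in Propositions~\ref{thm:random_rho} and~\ref{prop:randomrho}; no new probabilistic work is required. For the lower bound $\lambda_{2,\alpha}(G) = \Omega(\sqrt n)$, I observe that when $V_1,\dots,V_k$ are pairwise disjoint nonempty subsets of $V$ one has $e_K(V_1,\dots,V_k) = \size{V_1}\cdots\size{V_k}$, so the conclusion of Theorem~\ref{thm:mixing_hypergraph_density} specializes to
\begin{align}
    \abs{e(V_1,\dots,V_k) - \alpha\size{V_1}\cdots\size{V_k}} \leq \lambda_{2,\alpha}(G)\sqrt{\size{V_1}\cdots\size{V_k}}.
\end{align}
Dividing by the square root and taking the maximum over all disjoint tuples shows $\rho_\alpha(G) \leq \lambda_{2,\alpha}(G)$, and then Proposition~\ref{prop:randomrho} (which gives $\rho_\alpha(G) = \Omega(\sqrt n)$ with high probability) yields the lower bound.

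For the upper bound $\lambda_{2,\alpha}(G) = O(\sqrt n \log^{k-1} n)$, I apply Theorem~\ref{thm:inverse_mixing_hypergraph} to $G$ with $\rho = \rho_\alpha(G)$. Proposition~\ref{thm:random_rho} gives $\rho = O(\sqrt n)$ w.h.p. The maximum codegree $r$ of $G$ satisfies $r \leq n - k + 1 \leq n$ deterministically, since any $(k-1)$-set is contained in at most $n - k + 1$ edges; hence $(r + \alpha n)\,n^{k-2}/\rho = O(n^{k-3/2})$, whose logarithm is $\Theta(\log n)$. Theorem~\ref{thm:inverse_mixing_hypergraph} then produces
\begin{align}
    \lambda_{2,\alpha}(G) = O\!\left(\sqrt n \left(\log^{k-1}\!\bigl(n^{k-3/2}\bigr) + 1\right)\right) = O\bigl(\sqrt n \log^{k-1} n\bigr),
\end{align}
as desired.

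The whole argument is essentially bookkeeping: Theorem~\ref{thm:inverse_mixing_hypergraph} was designed precisely to convert bounds on $\rho_\alpha$ into bounds on $\lambda_{2,\alpha}$, and Propositions~\ref{thm:random_rho} and~\ref{prop:randomrho} already supply the matching estimate $\rho_\alpha(G) = \Theta(\sqrt n)$ for dense random hypergraphs. I foresee no real obstacle; the only point worth confirming is that the logarithmic factor in the inverse mixing lemma really collapses to $\log^{k-1} n$, which follows directly from the trivial codegree bound $r \leq n$ together with $\rho = \Theta(\sqrt n)$.
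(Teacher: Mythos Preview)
Your proposal is correct and follows exactly the route indicated in the paper: the lower bound comes from Theorem~\ref{thm:mixing_hypergraph_density} (which, for disjoint $V_i$, gives $\rho_\alpha(G)\le\lambda_{2,\alpha}(G)$) together with the lower half of Proposition~\ref{prop:randomrho}, and the upper bound comes from Theorem~\ref{thm:inverse_mixing_hypergraph} together with the upper half. The only thing you add beyond the paper's one-line sketch is the explicit verification that the logarithmic factor collapses to $\log^{k-1} n$, which is straightforward from $r\le n$ and $\rho_\alpha(G)=\Theta(\sqrt n)$.
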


This is proven by combining Proposition \ref{prop:randomrho} with the bounds on $\lambda_{2,\alpha}$ found in Theorem \ref{thm:mixing_hypergraph_density} and Theorem \ref{thm:inverse_mixing_hypergraph}.

\nocite{Ch70}
\nocite{P13}
\nocite{SKM12}
\bibliography{main}
\bibliographystyle{plainnat}

\end{document}